\documentclass[11pt,reqno]{amsart}
\usepackage{amsmath,mathrsfs}

\usepackage{enumerate}
\usepackage[english]{babel}
\usepackage{hyperref}
\usepackage[all,cmtip]{xy}
\entrymodifiers={+!!<0pt,\fontdimen22\textfont2>}

\theoremstyle{theorem}
\newtheorem{thm}{Theorem}[section]
\newtheorem{lem}[thm]{Lemma}
\newtheorem{prop}[thm]{Proposition}
\newtheorem{cor}[thm]{Corollary}
\newtheorem*{thm-intr}{Theorem}

\theoremstyle{definition}
\newtheorem{defn}[thm]{Definition}
\newtheorem{ex}[thm]{Example}
\newtheorem{ques}[thm]{Question}

\theoremstyle{remark}
\newtheorem{rem}[thm]{Remark}
\newtheorem*{ack}{Acknowledgements}

%numbers
 %integer
 %complex
 %real

%operators
%\def\B{{\mathcal L}} %bounded
 %compact

\def\opp{\mathrm{opp}}

%coloneqq
\mathchardef\ordinarycolon\mathcode`\: 
\def\vcentcolon{\mathrel{\mathop\ordinarycolon}} 
\providecommand*\coloneqq{\mathrel{\vcentcolon\mkern-1.2mu}=}

%abbr
\def\cast{$C^{*}$}
\def\M{{\mathbb M}} %matrices
\DeclareMathOperator\id{id} %identity
%colimit
%\def\incl{\hookrightarrow}
\def\nsub{{$n$-sub\-homo\-gen\-eous}}
\begin{document}

\title{Approximations of Subhomogeneous Algebras}
\author{Tatiana Shulman and Otgonbayar Uuye}
\date{\today}   

\address{
Tatiana Shulman\\
Siena College\\
515 Loudon Road\\ 
Loudonville\\
NY 12211\\
USA
}
\address{
Otgonbayar Uuye\\
School of Mathematics\\
Cardiff University\\
Senghennydd Road\\
Cardiff, Wales, UK\\
CF24 4AG}

\begin{abstract} Let $n\ge 1$. Recall that a \cast-algebra is said to be \nsub\ if all its irreducible representations have dimension at most $n$. In this short note, we give various approximation properties characterising \nsub\- \cast-algebras.
\end{abstract}

\maketitle

\section{Introduction}
Let $A$ and $B$ be \cast-algebras and let $\phi\colon A \to B$ be a bounded linear map. For each integer $n \ge 1$, we can define maps 
	\begin{equation}
	\phi \otimes \id_{\M_{n}} \colon A\otimes \M_{n} \to B\otimes \M_{n},
	\end{equation}
where $\M_{n}$ denote the \cast-algebra of $n \times n$-matrices. We say that $\phi$ is {\em $n$-positive} if $\phi \otimes  \id_{\M_{n}}$ is positive and {\em $n$-contractive} if $\phi \otimes  \id_{\M_{n}}$ is contractive. We say that a map is {\em completely positive (completely contractive)} if it is $n$-positive ($n$-contractive) for all $n \ge 1$.  As usual, we abbreviate {\em completely positive} by c.p., {\em contractive and completely positive} by c.c.p., {\em unital and completely positive} by u.c.p.\ and {\em completely contractive} by c.c. Note that u.c.p.\ maps are c.c.p.\ and c.c.p.\ maps are c.c.\ by the Stinespring dilation theorem \cite[Theorem 1]{MR0069403}.

Finite-dimensional approximation properties of maps and \cast-algebras play an important role in the study of \cast-algebras. See \cite{MR2391387} for a comprehensive treatment.
\begin{defn}
A c.c.p.\ map $\theta\colon A \to B$ is said to be {\em nuclear} if there exist {\em finite-dimensional \cast-algebras} $F_{\alpha}$ and nets of c.c.p.\ maps $\phi_{\alpha}\colon A \to F_{\alpha}$ and $\psi_{\alpha}\colon F_{\alpha} \to B$ such that for all $x \in A$,
	\begin{equation}
	||(\theta - \psi_{\alpha} \circ \phi_{\alpha})(x)|| \to 0 \quad\text{as $\alpha \to \infty$}.
	\end{equation}
\end{defn}
	
\begin{defn}	
A \cast-algebra $A$ is said to be {\em nuclear} if the identity map $\id_{A}\colon A \to A$ is nuclear and {\em exact} if there exists a faithful representation $\pi\colon A \to B(H)$ which is nuclear.
\end{defn}

The following is the standard example.
\begin{ex} Let $\Gamma$ be a countable discrete group. Then the {\em reduced group \cast-algebra} $C^{*}_{\lambda}(\Gamma)$ is nuclear if and only if $\Gamma$ is amenable. In particular, the reduced group \cast-algebra $C^{*}_{\lambda}(F_{2})$ of a free group on two generators is non-nuclear. See \cite[Section 2.6.]{MR2391387}.
\end{ex}

It is well known that a \cast-algebra is nuclear if and only if the identity map is a point-norm limit of finite-rank c.c.p.\ maps. On the other hand, it was shown by De Canni\`ere and Haagerup that the identity map on $C^{*}_{\lambda}(F_{2})$ is a point-norm limit of finite-rank c.c.\ maps (cf.\  \cite[Corollary 3.11]{MR784292}). This is in contrast to the following theorem of Smith, which says that we recover nuclearity if we insist that the finite-rank c.c.\ maps to factor through finite-dimensional \cast-algebras.
\begin{thm}[{Smith \cite{MR813204}}] A \cast-algebra $A$ is nuclear if and only if there exist {\em finite-dimensional \cast-algebras} $F_{\alpha}$ and nets of c.c.\ maps $\phi_{\alpha}\colon A \to F_{\alpha}$ and $\psi_{\alpha}\colon F_{\alpha} \to A$ such that for all $x \in A$,
	\begin{equation}
	||(\id_{A} - \psi_{\alpha} \circ \phi_{\alpha})(x)|| \to 0 \quad\text{as $\alpha \to \infty$}.
	\end{equation}
\qed	
\end{thm}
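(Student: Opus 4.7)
The forward direction is immediate: every c.c.p.\ map is c.c., so the c.c.p.\ factorizations witnessing nuclearity already satisfy the weaker hypothesis.

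For the converse, suppose c.c.\ maps $\phi_{\alpha}\colon A \to F_{\alpha}$ and $\psi_{\alpha}\colon F_{\alpha} \to A$ through finite-dimensional \cast-algebras $F_{\alpha}$ satisfy $\psi_{\alpha}\phi_{\alpha} \to \id_{A}$ point-norm; the goal is to construct c.c.p.\ approximations through finite-dimensional \cast-algebras. My plan is to use Paulsen's off-diagonal dilation together with the Stinespring-type representation it provides: any c.c.\ map $\phi\colon X \to Y$ admits a factorization $\phi(a) = V_1^* \pi(a) V_2$ with $\pi$ a representation and $V_1, V_2$ contractions, and accordingly sits as the $(1,2)$-corner of a c.p.\ map $X \to M_2(Y)$. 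Applied to $\phi_{\alpha}$ and $\psi_{\alpha}$ and composed via matrix amplifications, these dilations produce c.p.\ maps $\Theta_{\alpha}\colon A \to M_{4}(A)$ factoring through the finite-dimensional $M_2(F_{\alpha})$ and encoding $\psi_{\alpha}\phi_{\alpha}(a)$ as a specific off-diagonal entry.

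The principal obstacle is converting this encoding into genuine c.c.p.\ approximations $A \to A$ of $\id_{A}$: the Paulsen dilation is only c.p., with c.b.\ norm bounded above by twice the original (so a straightforward rescaling loses a factor of $2$ in the off-diagonal entry), and extraction of a single entry is not itself a c.p.\ operation. To overcome this I would work directly with the underlying Stinespring representations. For each $\alpha$, write $\phi_{\alpha}(a) = V_{1,\alpha}^{*}\pi_{\alpha}(a)V_{2,\alpha}$ with $\pi_{\alpha}$ a finite-dimensional representation of $A$, and form the u.c.p.\ maps $a \mapsto V_{\alpha}^{*} \pi_{\alpha}(a) V_{\alpha}$ obtained by symmetrizing $V_{1,\alpha}$ and $V_{2,\alpha}$ (and treat $\psi_{\alpha}$ analogously). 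These symmetrized maps are genuinely c.c.p.\ and factor through finite-dimensional matrix algebras, and a polarization argument using the Kadison--Schwarz inequality for u.c.p.\ maps together with the hypothesis $\psi_{\alpha}\phi_{\alpha}(a^{*}a) \to a^{*}a$ should force them to converge point-norm to $\id_{A}$, establishing nuclearity.
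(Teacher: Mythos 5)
The paper itself does not prove this statement: it is quoted from Smith's paper \cite{MR813204} and the \qed\ simply marks it as cited, so the only question is whether your argument closes. The forward direction is fine, but in the converse the decisive step --- that the symmetrized u.c.p.\ maps $a \mapsto V_{\alpha}^{*}\pi_{\alpha}(a)V_{\alpha}$ converge point-norm to $\id_{A}$ --- is asserted (``should force'') rather than proved, and it is false as stated. Writing $\tilde V = \tfrac{1}{\sqrt 2}(V_{1}\ V_{2})$, the compression $\tilde V^{*}\pi(a)\tilde V$ is a $2\times 2$ block matrix whose off-diagonal corner is $\tfrac12 V_{1}^{*}\pi(a)V_{2} = \tfrac12\phi_{\alpha}(a)$ but whose diagonal corners $\tfrac12 V_{i}^{*}\pi(a)V_{i}$ are uncontrolled c.p.\ maps. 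Already for $A = F = \M_{n}$, $\phi(a) = u^{*}av$ and $\psi = \phi^{-1}$ with $u \neq v$ unitaries (so $\psi\circ\phi = \id$ exactly), the symmetrization has diagonal corners $\tfrac12 u^{*}au$ and $\tfrac12 v^{*}av$, nowhere near $\tfrac12 a$; the useful information sits only in the off-diagonal corner, and extracting a single off-diagonal corner is not a positive operation, as you yourself note. Polarization merely re-expresses $\psi_{\alpha}\circ\phi_{\alpha}$ as a linear combination of four c.p.\ maps (the Wittstock decomposition), which does not produce c.p.\ maps approximating $\id_{A}$; and the Kadison--Schwarz/multiplicative-domain technique requires a c.p.\ map that approximately fixes elements, which is precisely what is missing. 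A secondary but also serious issue: the Wittstock--Paulsen factorization of $\psi_{\alpha}\colon F_{\alpha}\to A$ lives in $B(H)$ for some embedding $A \subseteq B(H)$, so the diagonal corners $W_{i}^{*}\sigma(b)W_{i}$ need not take values in $A$; at best your construction yields c.p.\ maps into $B(H)$, a weak-expectation-type conclusion rather than nuclearity. (That $\pi_{\alpha}$ need not be a finite-dimensional representation is a further inaccuracy, though harmless since the compression lands in a matrix algebra anyway.)

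The deeper point is that this theorem is exactly the statement that one cannot pass from completely contractive to completely positive approximations by soft dilation arguments: as the introduction recalls, $\id_{C^{*}_{\lambda}(F_{2})}$ is a point-norm limit of finite-rank c.c.\ maps, yet $C^{*}_{\lambda}(F_{2})$ is not nuclear. Any correct proof must therefore exploit the \cast-algebra structure of the finite-dimensional $F_{\alpha}$ in an essential, quantitative way --- Smith's argument does so via the extension and decomposability properties of maps into and out of finite-dimensional \cast-algebras, ultimately reducing to the equivalence of injectivity and semidiscreteness for $A^{**}$ --- whereas your sketch uses finite dimensionality only to keep the dilations inside matrix algebras, which is not where the difficulty lies. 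Repairing the argument requires a new idea for the final step, not a refinement of the symmetrization.
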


%for any $n \ge 1$, the identity map on $C^{*}_{\lambda}(F_{2})$ is a point-norm limit of finite-rank contractive $n$-positive maps (cf.\  \cite[Corollary 4.7]{MR784292}). 

All {\em abelian} \cast-algebras are nuclear. In fact, the standard proof based on partition of unities shows that one can take the finite-dimensional \cast-algebras $F_{\alpha}$'s to be abelian  and c.c.p.\ maps $\phi_{\alpha}$'s to be $*$-homomorphisms (cf.\ \cite[Proposition 2.4.2]{MR2391387}).

Our investigation grew out of the following simple question.
\begin{ques} Suppose that the there exist finite-dimensional  {\em abelian} \cast-algebras $F_{\alpha}$ and nets of c.c.p.\ maps $\phi_{\alpha}\colon A \to F_{\alpha}$ and $\psi_{\alpha}\colon F_{\alpha} \to A$ such that for all $x \in A$,
	\begin{equation}
	||(\id_{A} - \psi_{\alpha} \circ \phi_{\alpha})(x)|| \to 0 \quad\text{as $\alpha \to \infty$}.
	\end{equation}
Can we conclude that $A$ is abelian? Can we still conlcude that $A$ is abelian if we assume that the maps $\phi_{\alpha}$ and $\psi_{\alpha}$ are only c.c.?
\end{ques}

Not surprisingly, the answer is YES. In this paper we prove the following. 

Let $n \ge 1$. Recall that a \cast-algebra is said to be {\em \nsub} if all its irreducible representations have dimension $\le n$. Clearly, a \cast-algebra is abelian if and only if it is $1$-subhomogeneous. A finite-dimensional \cast-algebra is \nsub\- if and only if it is a finite product of matrix algebras $\M_{k}$ of size $k \le n$.

\begin{thm}\label{thm intr}
Let $A$ be a \cast-algebra and let $n \ge 1$ be an integer. Then the following are equivalent.
\begin{enumerate}
\item\label{it main nsub} The \cast-algebra $A$ is \nsub. 
\item\label{it main hom} There exist nets of {\em $*$-homomorphisms} $\phi_{\alpha}\colon A \to F_{\alpha}$ and c.c.p.\ maps $\psi_{\alpha}\colon F_{\alpha} \to A$, with $F_{\alpha}$ finite dimensional and \nsub, such that for all $x \in A$,
	\begin{equation}
	||x - \psi_{\alpha} \circ \phi_{\alpha}(x)|| \to 0 \quad\text{as $\alpha \to \infty$}.
	\end{equation}	
\item\label{it main ccp} There exist nets of c.c.p.\ maps $\phi_{\alpha}\colon A \to F_{\alpha}$ and $\psi_{\alpha}\colon F_{\alpha} \to A$, with $F_{\alpha}$ (finite dimensional and) \nsub, such that for all $x \in A$, 
	\begin{equation}
	||x - \psi_{\alpha} \circ \phi_{\alpha}(x)|| \to 0 \quad\text{as $\alpha \to \infty$}.
	\end{equation}
\item\label{it main cc} There exist nets of c.c.\ maps $\phi_{\alpha}\colon A \to F_{\alpha}$ and $\psi_{\alpha}\colon F_{\alpha} \to A$, with $F_{\alpha}$ (finite dimensional and) \nsub, such that for all $x \in A$, 
	\begin{equation}
	||x - \psi_{\alpha} \circ \phi_{\alpha}(x)|| \to 0 \quad\text{as $\alpha \to \infty$}.
	\end{equation} 
\end{enumerate}
\end{thm}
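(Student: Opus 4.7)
The implications $(2)\Rightarrow(3)\Rightarrow(4)$ are immediate from the definitions. For $(1)\Rightarrow(2)$, I would realize $A$ as a section algebra over its spectrum $\hat A$ and apply the standard partition-of-unity construction: for each finite subset of $A$ and $\varepsilon>0$, take a finite open cover $\{U_i\}_{i=1}^r$ of $\hat A$ fine enough on the prescribed elements, pick basepoints $P_i\in U_i$, let $F = \bigoplus_{i=1}^r A/P_i$ (finite-dimensional and \nsub), take $\phi$ to be the direct sum of quotient $*$-homomorphisms and $\psi$ to be built from a subordinate partition of unity; refining the cover yields the required net.

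The nontrivial direction is $(4)\Rightarrow(1)$, which I would prove by contradiction. Suppose $A$ is not \nsub; then $A$ admits an irreducible representation of dimension $\ge n+1$. The first step, which is the main technical obstacle, is to produce c.c.p.\ maps $s\colon\M_{n+1}\to A$ and $q\colon A\to\M_{n+1}$ with $q\circ s=\id_{\M_{n+1}}$: using Kadison's transitivity theorem in the type-I case and Glimm's theorem in the non-type-I case (together with standard matrix unit lifting), one obtains a copy of $\M_{n+1}$ as a \cast-subalgebra of $A$, and then $s$ is the inclusion and $q$ is any Arveson extension to $A$ of the identity on this subalgebra.

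Once $(q,s)$ is in hand, composing with $\psi_\alpha\circ\phi_\alpha\to\id_A$ produces c.c.\ maps $u_\alpha\coloneqq\phi_\alpha\circ s\colon\M_{n+1}\to F_\alpha$ and $v_\alpha\coloneqq q\circ\psi_\alpha\colon F_\alpha\to\M_{n+1}$ with $v_\alpha u_\alpha\to\id_{\M_{n+1}}$ in norm. Passing to an ultraproduct $F_\omega=\prod_\omega F_\alpha$ along a suitable ultrafilter, I obtain c.c.\ maps $u_\omega\colon\M_{n+1}\to F_\omega$ and $v_\omega\colon F_\omega\to\M_{n+1}$ with $v_\omega u_\omega=\id_{\M_{n+1}}$; crucially, $F_\omega$ remains \nsub, since the Amitsur--Levitzki polynomial identity $s_{2n}$ (which characterizes \nsub-ness) passes to ultraproducts. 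The contradiction comes from the transpose $\tau\colon\M_{n+1}\to\M_{n+1}$, for which $\|\tau\|_{\mathrm{cb}}=n+1$ while $\|\tau\otimes\id_{\M_n}\|=n$. Since $F_\omega$ is \nsub, Smith's theorem yields
\begin{equation}
\|\tau\circ v_\omega\|_{\mathrm{cb}}=\|(\tau\circ v_\omega)\otimes\id_{\M_n}\|\le\|\tau\otimes\id_{\M_n}\|\cdot\|v_\omega\|_{\mathrm{cb}}\le n,
\end{equation}
and combining with $\tau=(\tau\circ v_\omega)\circ u_\omega$ and $\|u_\omega\|_{\mathrm{cb}}\le 1$ forces $\|\tau\|_{\mathrm{cb}}\le n$, contradicting $\|\tau\|_{\mathrm{cb}}=n+1$.
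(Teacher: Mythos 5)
Your reduction of everything to $(4)\Rightarrow(1)$ is reasonable, but the punchline of that implication rests on a false lemma. The identity $\|\tau\circ v_\omega\|_{\mathrm{cb}}=\|(\tau\circ v_\omega)\otimes\id_{\M_n}\|$ is \emph{not} an instance of Smith's theorem: Smith's result says the cb-norm equals the $n$-norm for maps whose \emph{range} lies in $\M_n(C(X))$, whereas here the range of $\tau\circ v_\omega$ is $\M_{n+1}$ and only the \emph{domain} $F_\omega$ is \nsub. The domain version is simply false: already for $n=1$, a contractive linear map from an abelian \cast-algebra need not be completely contractive (send the coordinates of $\ell^\infty_4$ to suitably normalized Pauli matrices; the cb-norm strictly exceeds the norm), so no contradiction is reached and the argument collapses, in particular in the very case $n=1$ that motivated the paper. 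The fix is to compose the transpose on the \emph{other} side: $u_\omega\circ\tfrac1n\tau_{n+1}$ is $n$-contractive with range in the \nsub\ algebra $F_\omega$, hence completely contractive by Smith's (range) theorem, and then $\tfrac1n\tau_{n+1}=v_\omega\circ(u_\omega\circ\tfrac1n\tau_{n+1})$ would be completely contractive, contradicting $\|\tfrac1n\tau_{n+1}\otimes\id_{\M_{n+1}}\|=\tfrac{n+1}{n}$. This corrected version is essentially the paper's argument (Theorems 2.15 and 2.16), except that the paper dispenses with $s$, $q$ and the ultraproduct altogether: it quotes the contractive Choi-type theorem to produce an $n$-contractive, non-$(n+1)$-contractive map $A\to A$ whenever $A$ is not \nsub, and shows directly from (4) plus Smith that every $n$-contractive map with range $A$ must be $(n+1)$-contractive.

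Two further steps are asserted too casually. First, in $(4)\Rightarrow(1)$ you claim $A$ contains a copy of $\M_{n+1}$ as a \cast-subalgebra; this fails for $A=C_0((0,1],\M_{n+1})$, which has an $(n+1)$-dimensional irreducible representation but no nonzero projections, so ``matrix unit lifting'' cannot work. What is true, and suffices, is that $\M_{n+1}$ is a subquotient of $A$, so that $s$ exists by the Choi--Effros lifting theorem and $q$ by Arveson extension. Second, in $(1)\Rightarrow(2)$ the partition-of-unity scheme over $\hat A$ breaks down when $\hat A$ is not Hausdorff: the functions $P\mapsto\|\pi_P(x)\|$ are lower but not upper semicontinuous, so ``a cover fine enough on the prescribed elements'' is not available, and the fibers $A/P_i$ are the wrong local models (try $\{f\in C([0,1],\M_2):f(0)\ \text{diagonal}\}$ with basepoint one of the two characters over $0$). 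The paper sidesteps all of this by passing to the bidual: $A^{**}\cong\prod_{k\le n}\M_k(B_k)$ with $B_k$ abelian, where the abelian partition-of-unity argument applies verbatim, and the approximation is pulled back to $A$ by a point-weak approximation and convexity argument (Theorem 2.6 together with Lemmas 2.8 and 2.9).
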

\begin{proof} The nontrivial implications are (\ref{it main nsub}) $\Rightarrow$ (\ref{it main hom}), (\ref{it main ccp}) $\Rightarrow$ (\ref{it main nsub}) and (\ref{it main cc}) $\Rightarrow$ (\ref{it main nsub}). See Theorem~\ref{thm intr 2} below.
\end{proof}

Our proof is based on the solution of the Choi conjecture \cite{MR0315460}, due to Tomiyama \cite{MR680854} and Smith \cite{MR686514}, and a contractive analogue of the Choi conjecture (see Theorem~\ref{thm choi c}). See also \cite{MR0397423} and \cite{MR715564}.

The following is a summary of the results.
\begin{thm}\label{thm intr 2} Let $A$ be a \cast-algebra and let $n \ge 1$ be an integer. Then the following are equivalent.
\begin{enumerate}
\item\label{it main2 nsub} The \cast-algebra $A$ is \nsub. 
\item\label{it main2 hom} There exist nets of {\em $*$-homomorphisms} $\phi_{\alpha}\colon A \to F_{\alpha}$ and c.c.p.\ maps $\psi_{\alpha}\colon F_{\alpha} \to A$, with $F_{\alpha}$ finite dimensional and \nsub, such that for all $x \in A$,
	\begin{equation}
	||x - \psi_{\alpha} \circ \phi_{\alpha}(x)|| \to 0 \quad\text{as $\alpha \to \infty$}.
	\end{equation}
\item\label{it main2 ccp} There exist nets of $n$-positive maps $\phi_{\alpha}\colon A \to F_{\alpha}$ and $\psi_{\alpha}\colon F_{\alpha} \to A$, with $F_{\alpha}$ finite dimensional and \nsub, such that for all $x \in A$, 
	\begin{equation}
	||x - \psi_{\alpha} \circ \phi_{\alpha}(x)|| \to 0 \quad\text{as $\alpha \to \infty$}.
	\end{equation}
\item\label{it main2 pos} All $n$-positive maps with domain and/or range $A$ are completely positive.
\item\label{it main2 cc} There exist  nets of $n$-contractive maps $\phi_{\alpha}\colon A \to F_{\alpha}$ and $(n+1)$-contractive maps $\psi_{\alpha}\colon F_{\alpha} \to A$, with $F_{\alpha}$ finite dimensional and \nsub, such that for all $x \in A$, 
	\begin{equation}
	||x - \psi_{\alpha} \circ \phi_{\alpha}(x)|| \to 0 \quad\text{as $\alpha \to \infty$}.
	\end{equation}
\item\label{it main2 contr} All $n$-contractive maps with range $A$ are completely contractive.
\end{enumerate}
\end{thm}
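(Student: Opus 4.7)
My plan is to establish the theorem by running two cycles of implications, each anchored by an external input: the Tomiyama--Smith solution of the Choi conjecture for the positivity cycle, and the contractive Choi analogue (Theorem~\ref{thm choi c}) for the contractivity cycle. The easy reductions (\ref{it main2 hom}) $\Rightarrow$ (\ref{it main2 ccp}) and (\ref{it main2 hom}) $\Rightarrow$ (\ref{it main2 cc}) are immediate: $*$-homomorphisms are completely positive and completely contractive, and c.c.p.\ maps are in particular $n$-positive and $(n+1)$-contractive.

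For (\ref{it main2 nsub}) $\Rightarrow$ (\ref{it main2 hom}), I would exploit the local bundle structure of $n$-subhomogeneous \cast-algebras. Given a finite $F \subset A$ and $\epsilon > 0$, select irreducible representations $\pi_{1}, \ldots, \pi_{r}$ of dimensions $k_{i} \le n$ so that $\phi \coloneqq \bigoplus \pi_{i} \colon A \to F_{\alpha} \coloneqq \bigoplus \M_{k_{i}}$ is $\epsilon$-isometric on $F$, and build a c.c.p.\ ``approximate section'' $\psi \colon F_{\alpha} \to A$ by patching matrix-valued local sections using a partition of unity on the primitive spectrum of $A$. This is possible because $n$-subhomogeneous \cast-algebras are locally of the form $C_{0}(U) \otimes \M_{k}$ with $k \le n$, and such local sections combine into a global c.c.p.\ map.

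For the positivity cycle (\ref{it main2 ccp}) $\Rightarrow$ (\ref{it main2 pos}), let $\rho$ be an $n$-positive map with domain or range $A$, and approximate it by $\psi_{\alpha} \phi_{\alpha} \rho$ (range $A$ case) or $\rho \psi_{\alpha} \phi_{\alpha}$ (domain $A$ case). Each relevant factor is an $n$-positive map whose domain or range is the $n$-subhomogeneous algebra $F_{\alpha}$, hence completely positive by Tomiyama--Smith. The compositions are therefore c.p., and c.p.\ maps are closed under point-norm limits, so $\rho$ is c.p. The equivalence (\ref{it main2 pos}) $\Leftrightarrow$ (\ref{it main2 nsub}) is precisely Tomiyama--Smith.

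The contractivity cycle (\ref{it main2 cc}) $\Rightarrow$ (\ref{it main2 contr}) follows the same pattern with one extra subtlety. Given $n$-contractive $\rho \colon B \to A$, the composition $\phi_{\alpha} \rho$ is $n$-contractive with range in the $n$-subhomogeneous $F_{\alpha}$, hence completely contractive by Theorem~\ref{thm choi c} applied to $F_{\alpha}$. The crucial additional step is to show that $\psi_{\alpha}$, only assumed $(n+1)$-contractive, is completely contractive; this is a domain-version of Smith's theorem, asserting that for maps out of a finite-dimensional $n$-subhomogeneous \cast-algebra the cb-norm coincides with the $(n+1)$-norm. Combining these, $\psi_{\alpha} \phi_{\alpha} \rho$ is c.c., so $\rho$ is c.c.\ by passage to the point-norm limit. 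The equivalence (\ref{it main2 contr}) $\Leftrightarrow$ (\ref{it main2 nsub}) is Theorem~\ref{thm choi c}. The main obstacle is the c.c.p.\ section construction in (\ref{it main2 nsub}) $\Rightarrow$ (\ref{it main2 hom}); a secondary difficulty is the dual-Smith step in the contractive cycle, which depends on the refined cb-norm calculus for maps out of finite-dimensional $n$-subhomogeneous algebras and explains the asymmetric appearance of $n$ and $(n+1)$ in (\ref{it main2 cc}).
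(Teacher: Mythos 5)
Your positivity cycle (\ref{it main2 ccp}) $\Rightarrow$ (\ref{it main2 pos}) $\Leftrightarrow$ (\ref{it main2 nsub}) is correct and is exactly the paper's Theorem~\ref{thm sub}. However, there are two genuine gaps elsewhere.

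First, your proof of (\ref{it main2 nsub}) $\Rightarrow$ (\ref{it main2 hom}) rests on the assertion that an \nsub\ \cast-algebra is locally of the form $C_{0}(U)\otimes \M_{k}$, and that c.c.p.\ sections can be patched by a partition of unity on the primitive spectrum. This is false in general: the primitive ideal space of a subhomogeneous algebra need not be Hausdorff, the dimensions of irreducible representations jump, and even on the homogeneous pieces the algebra need not be a trivial bundle; the paper explicitly warns that the structure ``can be rather complicated'' (citing Fell). Choosing $\phi$ to be a finite direct sum of irreducibles is fine (subhomogeneous algebras are residually finite-dimensional), but the existence of the c.c.p.\ approximate section $\psi$ is precisely the hard content, and your sketch does not supply it. The paper's route avoids the spectrum entirely: by Lemma~\ref{lem vN} the bidual satisfies $A^{**}\cong\prod_{k\le n}\M_{k}(B_{k})$ with $B_{k}$ abelian von Neumann algebras, the approximation is carried out there (reducing to the abelian partition-of-unity argument), and one transfers back to $A$ via the point-ultraweak approximation of u.c.p.\ maps $F\to A^{**}$ by u.c.p.\ maps $F\to A$ (Lemma~\ref{lem app}) together with the convexity trick (Lemma~\ref{lem convex}). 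You would need an argument of comparable strength here.

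Second, in the contractive cycle your implication (\ref{it main2 cc}) $\Rightarrow$ (\ref{it main2 contr}) invokes a ``domain-version of Smith's theorem,'' namely that $(n+1)$-contractive maps out of a finite-dimensional \nsub\ algebra are completely contractive. This is not proved, not cited in the paper, and is delicate: already for $n=1$, merely contractive maps out of finite-dimensional abelian algebras need not be completely contractive, so the domain side behaves quite differently from the range side (cf.\ Remark~\ref{rem loebl}). The paper does not need this claim. Its argument composes $\phi_{\alpha}\circ\rho$ (completely contractive because its range $F_{\alpha}$ is \nsub, by Smith's theorem \cite[Theorem 2.10]{MR686514} --- note this is \emph{not} Theorem~\ref{thm choi c}, which is only the existence/negative direction) with the merely $(n+1)$-contractive $\psi_{\alpha}$, concluding only that $\rho$ is $(n+1)$-contractive. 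That weaker conclusion already forces $A$ to be \nsub\ via Theorem~\ref{thm choi c}, and then full complete contractivity of all $n$-contractive maps with range $A$ follows from Smith's theorem applied to $A$ itself. This detour through the intermediate condition ``$(n+1)$-contractive'' (condition (\ref{item range +}) of Theorem~\ref{thm sub-contr}) is what makes the asymmetric hypotheses in (\ref{it main2 cc}) usable without any domain-Smith lemma; you should restructure your contractive cycle accordingly.
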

\begin{proof} See Theorems~\ref{thm app}, \ref{thm sub} and \ref{thm sub-contr}.
\end{proof}

In section~\ref{sec ab}, we show that even weaker approximation property characterises abelianness. See Theorem~\ref{thm takesaki proof}.

\begin{ack} The authors were partially supported by the Danish Research Council through the Centre for Symmetry and Deformation at the University of Copenhagen. The second author is an EPSRC fellow.
\end{ack}

\section{Subhomogeneous Algebras}\label{sec sub}

Let $n \ge 1$ be an integer.
\begin{defn} We say that a \cast-algebra is {\em \nsub} if all its irreducible representations have dimension $\le n$.
\end{defn}

\begin{ex}
A finite-dimensional \cast-algebra is \nsub\- if and only if it is a finite product of matrix algebras $\M_{k}$ with $k \le n$.
\end{ex}

In the following, we summarise some well-known properties of \nsub\- \cast-algebras. See also \cite[IV.1.4]{MR2188261}.

\begin{prop}\label{prop subhom} Let $n \ge 1$ be an integer. The following statements hold.
\begin{enumerate}
\item\label{i subalg} A \cast-subalgebra of a \nsub\- algebra is \nsub.
\item\label{i ab} A \cast-algebra $A$ is \nsub\- if and only if $A \subseteq \M_{n}(B)$ for some abelian \cast-algebra $B$.   
\item\label{i dd} A \cast-algebra $A$ is \nsub\- if and only if its bidual $A^{**}$ is \nsub\- as a \cast-algebra.
\item\label{i prod} The product/sum of \cast-algebras $A_{i}$, $i \in I$, is \nsub\- if and only if each $A_{i}$, $i \in I$, is \nsub. 
\item\label{i ext} Let $0 \to I \to A \to B \to 0$ be an extension of \cast-algebras. Then $A$ is \nsub\- if and only if $I$ and $B$ are \nsub.
%\item\labek{i vN} If $A \subseteq M$ 
\end{enumerate}
\end{prop}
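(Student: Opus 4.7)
My plan is to take (\ref{i ab}) as the central structural result, which reduces (\ref{i subalg}), (\ref{i dd}), and (\ref{i prod}) to formalities, and then handle (\ref{i ext}) by a direct representation-theoretic argument.

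For (\ref{i ab}), the ``if'' direction is easy: if $B$ is abelian, faithfully represent it as $B \hookrightarrow C_{0}(X)$ for some locally compact Hausdorff $X$, so that $\M_{n}(B) \hookrightarrow \M_{n}(C_{0}(X)) = C_{0}(X, \M_{n})$; the irreducible representations of the latter are point evaluations into $\M_{n}$, and any \cast-subalgebra inherits being \nsub\ via (\ref{i subalg}). For the ``only if'' direction, I would use that the intersection of all primitive ideals of a \cast-algebra vanishes, giving a faithful embedding $A \hookrightarrow \prod_{[\pi] \in \widehat{A}} \mathcal{B}(H_{\pi})$ via the sum of all irreducible representations. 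Since $d_{\pi} \coloneqq \dim H_{\pi} \le n$, I would non-unitally embed each $\mathcal{B}(H_{\pi}) \cong \M_{d_{\pi}}$ into $\M_{n}$ as the top-left corner and identify the product with $\M_{n}(B)$ for $B \coloneqq \prod_{[\pi]} \C$, which is abelian.

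Granting (\ref{i ab}), the items (\ref{i subalg}), (\ref{i dd}), and (\ref{i prod}) follow formally. A \cast-subalgebra of a \cast-subalgebra of $\M_{n}(B)$ is itself a \cast-subalgebra of $\M_{n}(B)$, giving (\ref{i subalg}). For (\ref{i dd}), one has $\M_{n}(B)^{**} \cong \M_{n}(B^{**})$, and $B^{**}$ is abelian whenever $B$ is; combined with $A \hookrightarrow \M_{n}(B)$ this yields $A^{**} \hookrightarrow \M_{n}(B^{**})$, while the converse follows from $A \subseteq A^{**}$ and (\ref{i subalg}). For (\ref{i prod}), embeddings $A_{i} \hookrightarrow \M_{n}(B_{i})$ assemble into $\prod_{i} A_{i} \hookrightarrow \M_{n}(\prod_{i} B_{i})$, and conversely each $A_{i}$ appears as a \cast-subalgebra of the product.

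For (\ref{i ext}), I would argue directly on irreducible representations. The forward direction uses (\ref{i subalg}) for $I$ together with the fact that any irreducible representation of $B = A/I$ pulls back to an irreducible representation of $A$ of the same dimension. For the converse, given an irreducible $\pi\colon A \to \mathcal{B}(H)$, the standard dichotomy is that either $\pi(I) = 0$, so that $\pi$ descends to an irreducible representation of $B$ and hence $\dim H \le n$, or else $\pi|_{I}$ is already irreducible on $I$ with the same image subspace, again giving $\dim H \le n$. I expect the main obstacle to be the bookkeeping in the forward direction of (\ref{i ab}), where one must uniformly package matrix blocks of varying sizes into $\M_{n}(\cdot)$ over a single abelian base; all other steps reduce to either a formal consequence of (\ref{i ab}) or to a standard fact about irreducible representations of ideals and quotients.
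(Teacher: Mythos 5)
Your treatment of (\ref{i ab}), (\ref{i dd}) and (\ref{i prod}) matches the paper's: the ``only if'' direction of (\ref{i ab}) is exactly the paper's embedding $A \subseteq \M_{n}(l^{\infty}(\widehat{A}))$ via the sum of all irreducible representations, and (\ref{i dd}), (\ref{i prod}) are then the same formal consequences. For (\ref{i ext}) you take a genuinely different route: you argue directly with the ideal/quotient dichotomy for irreducible representations, whereas the paper deduces it from (\ref{i dd}) and (\ref{i prod}) via $A^{**} \cong I^{**} \oplus B^{**}$. Both are valid; yours is more elementary, the paper's avoids quoting the representation theory of ideals.

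The genuine problem is a circularity around (\ref{i subalg}). Your proof of the ``if'' direction of (\ref{i ab}) ends by invoking (\ref{i subalg}) (``any \cast-subalgebra inherits being \nsub\ via (\ref{i subalg})''), but you then propose to \emph{derive} (\ref{i subalg}) from (\ref{i ab}): a subalgebra of a subalgebra of $\M_{n}(B)$ sits inside $\M_{n}(B)$, hence is \nsub\ --- by the very ``if'' direction that presupposed (\ref{i subalg}). As written, item (\ref{i subalg}) is never actually proved, and it is the one item with real content: one must know that every irreducible representation of a \cast-subalgebra $C \subseteq D$ has dimension bounded by that of some irreducible representation of $D$. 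This is the pure-state extension theorem (every pure state $\varphi$ of $C$ extends to a pure state of $D$, and the GNS representation of $\varphi$ embeds as a subrepresentation of the restriction to $C$ of the GNS representation of the extension), which is exactly what the paper cites (Pedersen, Proposition 4.1.8). You must either prove (\ref{i subalg}) first by this route, or prove the ``if'' direction of (\ref{i ab}) without it --- for instance by noting that $\M_{n}(B)$ with $B$ abelian satisfies the standard polynomial identity of degree $2n$ (Amitsur--Levitzki), hence so do $A$, $\pi(A)$ and, by Kaplansky density, $\pi(A)'' = B(H_{\pi})$ for $\pi$ irreducible, forcing $\dim H_{\pi} \le n$. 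Some such input must be supplied; the proposal as it stands only moves the difficulty in a circle, and your closing remark that the main obstacle is the ``bookkeeping'' of packaging matrix blocks into $\M_{n}(\cdot)$ misidentifies where the work actually lies.
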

\begin{proof}
\begin{enumerate}
\item[(\ref{i subalg})] Follows from \cite[Proposition 4.1.8]{MR548006}. 
\item[(\ref{i ab})] If $A$ is \nsub, then $A \subseteq \M_{n}(l^{\infty}(\widehat{A}))$, where $\widehat{A}$ denote the set of unitary equivalence classes of irreducible representations of $A$. The other direction follows from (\ref{i subalg}).
\item[(\ref{i dd})] Since $A \subseteq A^{**}$, if $A^{**}$ is \nsub, then so is $A$ by (\ref{i subalg}). Conversely, if $A$ is \nsub, then writing $A \subseteq \M_{n}(B)$ with $B$ abelian using (\ref{i ab}), we see that $A^{**} \subseteq \M_{n}(B^{**})$. We are done by (\ref{i ab}), since $B^{**}$ is abelian.
\item[(\ref{i prod})]  Follows from (\ref{i ab}).
\item[(\ref{i ext})] Follows from (\ref{i dd}) and (\ref{i prod}), since $A^{**} \cong I^{**} \oplus B^{**}$.
\end{enumerate}
\end{proof}

The structure of \nsub\- \cast-algebras can be rather complicated (see for instance \cite{MR0139025}). However, the situation for von Neumann algebras is well-known to be very simple. 
\begin{lem}\label{lem vN} Suppose that a von Neumann algebra $M$ is \nsub\- as a \cast-algebra. Then
	\begin{equation}
	M \cong \prod_{k \le n} \M_{k}(B_{k}),
	\end{equation}
where $B_{k}$, $k \le n$, are {\em abelian} von Neumann algebras.	
\end{lem}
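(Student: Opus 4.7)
My plan is to prove the lemma in two stages: first show that $M$ is of type $\mathrm{I}$ as a von Neumann algebra, then invoke the classical type decomposition of type $\mathrm{I}$ von Neumann algebras into homogeneous summands.

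For the first stage, I would apply Proposition~\ref{prop subhom}(\ref{i ab}) to realize $M$ as a C*-subalgebra of $\M_{n}(C)$ for some abelian C*-algebra $C$. Passing to biduals yields a normal injective $*$-homomorphism $M^{**} \hookrightarrow \M_{n}(C^{**})$. Since $M$ is itself a von Neumann algebra, it sits inside $M^{**}$ as a direct summand (the normal part), and composing places $M$ as a von Neumann subalgebra of $\M_{n}(C^{**})$. Because $C^{**}$ is an abelian von Neumann algebra, $\M_{n}(C^{**})$ is of type $\mathrm{I}_{n}$, and the standard fact that any von Neumann subalgebra of a type $\mathrm{I}$ von Neumann algebra is again of type $\mathrm{I}$ then forces $M$ itself to be of type $\mathrm{I}$.

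For the second stage, I would invoke the type $\mathrm{I}$ decomposition $M \cong \prod_{\kappa} M_{\kappa}$, where $\kappa$ ranges over cardinals and $M_{\kappa}$ is the homogeneous type $\mathrm{I}_{\kappa}$ central summand; explicitly, $M_{\kappa} \cong B(H_{\kappa}) \bar{\otimes} Z(M_{\kappa})$ with $\dim H_{\kappa} = \kappa$. For finite $k$ this gives $M_{k} \cong \M_{k}(B_{k})$ with $B_{k} \coloneqq Z(M_{k})$ abelian, as required. To rule out $\kappa > n$, observe that evaluating at any character of $Z(M_{\kappa})$ produces an irreducible representation of $M_{\kappa}$ of dimension $\kappa$; this would contradict $n$-subhomogeneity, so only the summands with $k \le n$ survive, and we obtain $M \cong \prod_{k \le n} \M_{k}(B_{k})$.

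The main obstacle is really invoking the two structural facts about von Neumann algebras cleanly: the hereditary behaviour of type $\mathrm{I}$ under taking von Neumann subalgebras, and the decomposition of a type $\mathrm{I}$ von Neumann algebra into homogeneous summands with $M_{k} \cong \M_{k}(Z(M_{k}))$ for finite $k$. Both are classical, so given them the remainder of the argument is essentially bookkeeping to match $n$-subhomogeneity with the cardinality restriction $k \le n$.
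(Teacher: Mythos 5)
Your argument is correct in outline but follows a genuinely different route from the paper's. The paper's proof is a two-line reduction to the literature: by Proposition~\ref{prop subhom}(\ref{i ab}) an \nsub\ algebra embeds into $\M_{n}(B)$ with $B$ abelian, hence is exact (exactness passes to \cast-subalgebras), and then the structure theorem for von Neumann algebras that are exact as \cast-algebras (\cite[Proposition 2.4.9]{MR2391387}) gives the displayed form directly. You instead run the classical type-decomposition machinery by hand: show $M$ is type $\mathrm{I}$, decompose it into homogeneous summands $B(H_{\kappa})\bar{\otimes}Z(M_{\kappa})$, and use $n$-subhomogeneity to kill the summands with $\kappa>n$. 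Your route is more self-contained --- it never mentions exactness --- at the cost of invoking two structural facts about von Neumann algebras; the paper's is shorter but outsources everything to the cited proposition. Both are legitimate.

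There is, however, one step you should not leave as stated: ``any von Neumann subalgebra of a type $\mathrm{I}$ von Neumann algebra is again of type $\mathrm{I}$'' is false --- the hyperfinite $\mathrm{II}_{1}$ factor is a von Neumann subalgebra of $B(\ell^{2})$. What is true, and what you actually need, is the finite-multiplicity version: a von Neumann subalgebra of a type $\mathrm{I}_{n}$ algebra with $n$ finite (equivalently, of $\M_{n}(Z)$ with $Z$ abelian) is type $\mathrm{I}$. Even this is most cleanly obtained from $n$-subhomogeneity itself: if $M$ had a central summand with no type $\mathrm{I}$ part, the standard halving argument would produce unital copies of $\M_{2^{k}}$ inside that summand for every $k$, contradicting Proposition~\ref{prop subhom}(\ref{i subalg}) as soon as $2^{k}>n$; this also makes your detour through biduals unnecessary. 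A second, minor point: for infinite $\kappa$ the slice map $\id\otimes\chi$ at a (generally non-normal) character $\chi$ of $Z(M_{\kappa})$ is not obviously defined on the von Neumann tensor product $B(H_{\kappa})\bar{\otimes}Z(M_{\kappa})$; it is simpler to observe that this summand contains $B(H_{\kappa})\otimes 1$ as a \cast-subalgebra, which is not \nsub\ when $\dim H_{\kappa}>n$, and appeal to Proposition~\ref{prop subhom}(\ref{i subalg}) once more. With these repairs your proof is complete.
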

\begin{proof} Since exactness passes to \cast-subalgebras, \nsub\- algebras are exact by Proposition~\ref{prop subhom}(\ref{i ab}). Now \cite[Proposition 2.4.9]{MR2391387} completes the proof.
\end{proof}

Subhomogeneous algebras are type I, hence nuclear (cf.\ \cite[Proposition 2.7.7]{MR2391387}). Scrutinizing the proof, we see that the following slightly stronger approximation property holds. We consider the unital case first.
\begin{thm}\label{thm app unital} Let $n \ge 1$ and let $A$ be a unital \nsub\- \cast-algebra. Then there exist finite-dimensional \nsub\- \cast-algebras $F_{\alpha}$ and nets of {unital \em $*$-homomorphisms} $\phi_{\alpha}\colon A \to F_{\alpha}$ and u.c.p.\ maps $\psi_{\alpha}\colon F_{\alpha} \to A$ such that for all $x \in A$,
	\begin{equation}
	||x - \psi_{\alpha} \circ \phi_{\alpha}(x)|| \to 0 \quad\text{as $\alpha \to \infty$}.
	\end{equation}
\end{thm}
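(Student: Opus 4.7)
Plan: I would reprise the standard proof that type~I (and hence any \nsub) C*-algebra is nuclear, tightened so that the downward maps become genuine $\ast$-homomorphisms and the intermediate finite-dimensional algebras are themselves \nsub. First, I would realise $A$ as a $C(Y)$-algebra via Dauns--Hofmann, where $Y$ is the compact Hausdorff spectrum of the centre $Z(A)\cong C(Y)$; then, combining quotient maps onto finitely many fibres with a partition of unity lying inside the centre will yield the desired approximation.

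The technical heart will be a structural lemma asserting that each fibre $A_y := A/(\ker\chi_y)A$ is finite-dimensional (and in fact a direct sum of matrix algebras $\M_{k_{y,i}}$ with $k_{y,i}\le n$, which is what makes the target \nsub). This is where \nsub-ity enters nontrivially, forcing only finitely many inequivalent irreducible representations of $A$ to share any given central character. I expect this to be the main obstacle, though it is a standard structural fact about type~I algebras with bounded irreducible dimension. Along the way I would use the standard $C(Y)$-algebra identity $\|a\|=\sup_{y\in Y}\|\pi_y(a)\|$, where $\pi_y\colon A\to A_y$ is the quotient, together with the upper semi-continuity of $y\mapsto\|\pi_y(a)\|$.

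Given a finite set $F\subset A$ and $\epsilon>0$, for each $y\in Y$ I would choose a u.c.p.\ section $\sigma_y\colon A_y\to A$ of $\pi_y$, which exists because $A_y$ is finite-dimensional. Then $\sigma_y(\pi_y(a))-a\in\ker\pi_y$ for every $a\in F$, and upper semi-continuity furnishes an open $V_y\ni y$ on which $\|\pi_{y'}(\sigma_y\pi_y(a)-a)\|<\epsilon$ for all $a\in F$. Compactness of $Y$ produces a finite subcover $\{V_{y_j}\}_{j=1}^N$ with a subordinate partition of unity $\{f_j\}\subset C(Y)=Z(A)$, and I would set
\begin{equation*}
F_\alpha := \bigoplus_{j=1}^N A_{y_j},\qquad \phi(a) := (\pi_{y_j}(a))_j,\qquad \psi\bigl((m_j)_j\bigr) := \sum_j f_j\,\sigma_{y_j}(m_j).
\end{equation*}
Then $F_\alpha$ is finite-dimensional \nsub, $\phi$ is a unital $\ast$-homomorphism, and $\psi$ is u.c.p.: unitality from $\sum_j f_j=1_A$, and complete positivity because each summand $m\mapsto f_j\sigma_{y_j}(m)=f_j^{1/2}\sigma_{y_j}(m)f_j^{1/2}$ is c.p.\ via conjugation by the central element $f_j^{1/2}$. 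At any $y\in Y$, the condition $f_j(y)\ne 0$ forces $y\in V_{y_j}$, whence
\begin{equation*}
\|\pi_y(\psi\phi(a)-a)\|\le\sum_j f_j(y)\,\|\pi_y(\sigma_{y_j}\pi_{y_j}(a)-a)\|<\epsilon,
\end{equation*}
so taking the supremum over $y$ yields $\|\psi\phi(a)-a\|\le\epsilon$.

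The main obstacle, as emphasised, is the structural claim that the $Z(A)$-fibres of a unital \nsub\ C*-algebra are finite-dimensional; once this is in hand, the remainder is a routine partition-of-unity argument, whose only subtlety is that the partition must lie inside $Z(A)$, ensuring both that $\psi$ lands in $A$ and that it remains completely positive via central multipliers.
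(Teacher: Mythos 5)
Your partition-of-unity bookkeeping at the end is fine, but the step you yourself flag as the main obstacle --- the structural lemma that every fibre $A_{y}=A/\overline{(\ker\chi_{y})A}$ of a unital \nsub\ \cast-algebra over the spectrum of its centre is finite-dimensional --- is not a standard fact about subhomogeneous algebras: it is false. Bounding the dimensions of the irreducible representations does not prevent infinitely many inequivalent irreducible representations from sharing a single central character. Concretely, let $[0,\infty]$ be the one-point compactification of $[0,\infty)$ and set
\begin{equation*}
A \coloneqq \bigl\{ f \in C([0,\infty],\M_{2}) \ : \ f(k)=\mathrm{diag}(a_{k},a_{k+1})\ \text{for some scalars } a_{k},\ k=0,1,2,\dots \bigr\}.
\end{equation*}
This is a unital $2$-subhomogeneous \cast-algebra (it sits inside $\M_{2}(C([0,\infty]))$). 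A central element must be a scalar-valued function $g\cdot 1_{2}$, and membership in $A$ forces $g(k)=a_{k}=a_{k+1}=g(k+1)$ for every $k$, so $Z(A)$ consists of the $g$ that are constant on $S\coloneqq\{0,1,2,\dots\}\cup\{\infty\}$. At the character $\chi_{*}$ corresponding to the collapsed set $S$, the Glimm ideal is $\{f\in A: f|_{S}=0\}$, and the fibre $A_{*}$ is the algebra of convergent sequences $(a_{k})_{k\ge 0}$, i.e.\ $C(S)$ --- infinite-dimensional. Your construction then breaks in two places at once: $F_{\alpha}=\bigoplus_{j}A_{y_{j}}$ need not be finite-dimensional, and the u.c.p.\ section $\sigma_{y}$ of $\pi_{y}$, which you justified by finite-dimensionality of the fibre, is no longer available.

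This is precisely the difficulty the paper's proof avoids by never fibering $A$ itself: it passes to the bidual, where Lemma~\ref{lem vN} gives the clean global form $A^{**}\cong\prod_{k\le n}\M_{k}(B_{k})$ with $B_{k}$ abelian von Neumann algebras, runs the abelian (partition-of-unity) argument there, and then pulls the $n$-factorable approximations back to $A$ using Lemma~\ref{lem app} (point-ultraweak approximation of u.c.p.\ maps out of finite-dimensional algebras by maps into $A$), convexity of the $n$-factorable maps, and the standard point-weak-to-point-norm upgrade. If you want to keep a genuinely ``spatial'' proof, the centre is too coarse an invariant; you would need a finer decomposition, e.g.\ an induction on $n$ along the canonical ideal filtration of $A$ by homogeneous subquotients, handling the non-Hausdorff gluing between strata --- which is considerably more work than the bidual route.
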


\begin{defn} Let $A$ and $B$ be unital \cast-algebras. We say a u.c.p.\ map $\theta\colon A \to B$ is {\em $n$-factorable} if it can be expressed as a composition $\theta = \psi \circ \phi$, where $\phi\colon A \to F$ is a unital $*$-homomorphism and $\psi\colon F \to B$ a u.c.p.\ map and $F$ a finite-dimensional \nsub\- \cast-algebra.

\end{defn}
\begin{lem}\label{lem convex} For any unital \cast-algebras $A$ and $B$, the set of $n$-factorable maps $A \to B$ is convex.
\end{lem}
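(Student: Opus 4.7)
The plan is to prove convexity by realising a convex combination of two $n$-factorable maps through the direct sum of the intermediate algebras. Concretely, suppose $\theta_{1} = \psi_{1} \circ \phi_{1}$ and $\theta_{2} = \psi_{2} \circ \phi_{2}$ are $n$-factorable, with $\phi_{i}\colon A \to F_{i}$ unital $*$-homomorphisms, $\psi_{i}\colon F_{i} \to B$ u.c.p.\ maps, and $F_{1}, F_{2}$ finite-dimensional \nsub\- \cast-algebras. Given $t \in [0,1]$, I want to exhibit an intermediate algebra $F$, a unital $*$-homomorphism $\phi\colon A \to F$, and a u.c.p.\ map $\psi\colon F \to B$ such that $\psi \circ \phi = t\theta_{1} + (1-t)\theta_{2}$.

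The natural choice is $F \vcentcolon= F_{1} \oplus F_{2}$, which is finite-dimensional and \nsub\- by Proposition~\ref{prop subhom}(\ref{i prod}). I would define $\phi\colon A \to F$ by $\phi(a) \vcentcolon= (\phi_{1}(a), \phi_{2}(a))$; this is a unital $*$-homomorphism since $\phi_{1}$ and $\phi_{2}$ are. For the second leg, I would define $\psi\colon F \to B$ by $\psi(x_{1}, x_{2}) \vcentcolon= t\, \psi_{1}(x_{1}) + (1-t)\, \psi_{2}(x_{2})$. Writing $\pi_{i}\colon F_{1} \oplus F_{2} \to F_{i}$ for the coordinate projections (which are unital $*$-homomorphisms, hence u.c.p.), $\psi = t(\psi_{1} \circ \pi_{1}) + (1-t)(\psi_{2} \circ \pi_{2})$ is a convex combination of u.c.p.\ maps, hence u.c.p.

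A direct computation then gives $\psi \circ \phi(a) = t\, \psi_{1}(\phi_{1}(a)) + (1-t)\, \psi_{2}(\phi_{2}(a)) = (t\theta_{1} + (1-t)\theta_{2})(a)$ for every $a \in A$, showing that $t\theta_{1} + (1-t)\theta_{2}$ is $n$-factorable. There is essentially no obstacle here; the only point worth flagging is the verification that $F_{1} \oplus F_{2}$ is still \nsub, but this is exactly the content of Proposition~\ref{prop subhom}(\ref{i prod}).
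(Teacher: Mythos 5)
Your proof is correct and is essentially the same argument the paper invokes: the paper simply cites the proof of \cite[Lemma 2.3.6]{MR2391387}, which is exactly this direct-sum construction $F = F_{1} \oplus F_{2}$ with $\phi = \phi_{1} \oplus \phi_{2}$ and $\psi(x_{1},x_{2}) = t\psi_{1}(x_{1}) + (1-t)\psi_{2}(x_{2})$. Your added check that $F_{1} \oplus F_{2}$ remains \nsub\- via Proposition~\ref{prop subhom}(\ref{i prod}) is the only point specific to this setting, and you handled it correctly.
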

\begin{proof} The proof of \cite[Lemma 2.3.6]{MR2391387} applies.
\end{proof}

\begin{lem}\label{lem app} Let $F$ be a finite-dimensional \cast-algebra and let $A$ be a unital \cast-algebra. Then u.c.p.\ maps $F \to A^{**}$ can be approximated by u.c.p.\ maps $F \to A$ in the point-ultraweak topology. 
\end{lem}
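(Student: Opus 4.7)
The plan is to encode u.c.p.\ maps via their Choi matrices, invoke the Kaplansky density theorem to approximate, and then restore unitality via a sandwich-plus-correction. Since $F$ is finite-dimensional, it decomposes as $F = \bigoplus_{l}\M_{k_{l}}$, and a blockwise argument reduces everything to the case $F = \M_{k}$, which I assume from now on.

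I recall that u.c.p.\ maps $\phi\colon \M_{k} \to B$ correspond bijectively to positive elements $C \in \M_{k}(B)_{+}$ with $\sum_{i}C_{ii} = 1_{B}$, via $C_{\phi} \coloneqq [\phi(e_{ij})]_{i,j}$. Applied to the given $\phi\colon \M_{k} \to A^{**}$, this gives $C_{\phi} \in \M_{k}(A^{**})_{+} = (\M_{k}(A))^{**}_{+}$. Since $\M_{k}(A) \subseteq \M_{k}(A)^{**}$ is ultraweakly dense, the Kaplansky density theorem provides a net $C_{\alpha} \in \M_{k}(A)_{+}$ with $\|C_{\alpha}\| \le \|C_{\phi}\|$ and $C_{\alpha} \to C_{\phi}$ ultrastrongly. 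The associated c.p.\ maps $\tilde\phi_{\alpha}\colon \M_{k} \to A$ defined by $\tilde\phi_{\alpha}(e_{ij}) \coloneqq (C_{\alpha})_{ij}$ then converge to $\phi$ point-ultrastrongly, hence point-ultraweakly; they are, however, not unital.

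The main obstacle will be modifying $\tilde\phi_{\alpha}$ into a u.c.p.\ map without destroying the approximation. Setting $a_{\alpha} \coloneqq \tilde\phi_{\alpha}(1_{\M_{k}}) \in A_{+}$, one has $a_{\alpha} \to 1_{A}$ ultrastrongly. For each $\epsilon > 0$ the element $b_{\alpha,\epsilon} \coloneqq a_{\alpha} + \epsilon \cdot 1_{A}$ is invertible with $b_{\alpha,\epsilon} \ge \epsilon \cdot 1_{A}$, and I would set
\begin{equation}
\phi_{\alpha,\epsilon}(x) \coloneqq b_{\alpha,\epsilon}^{-1/2}\, \tilde\phi_{\alpha}(x)\, b_{\alpha,\epsilon}^{-1/2} + \epsilon\, \tau(x)\, b_{\alpha,\epsilon}^{-1}, \qquad x \in \M_{k},
\end{equation}
where $\tau$ is the normalised trace on $\M_{k}$. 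A direct computation gives $\phi_{\alpha,\epsilon}(1_{\M_{k}}) = 1_{A}$, and $\phi_{\alpha,\epsilon}$ is a sum of two manifestly c.p.\ maps, hence u.c.p. Thanks to the uniform bound $b_{\alpha,\epsilon} \ge \epsilon \cdot 1_{A}$, continuous functional calculus yields $b_{\alpha,\epsilon}^{-1/2} \to (1+\epsilon)^{-1/2}\, 1_{A}$ and $b_{\alpha,\epsilon}^{-1} \to (1+\epsilon)^{-1}\, 1_{A}$ ultrastrongly, whence $\phi_{\alpha,\epsilon}(x) \to (1+\epsilon)^{-1}\phi(x) + \epsilon(1+\epsilon)^{-1}\tau(x)\, 1_{A}$ ultrastrongly as $\alpha \to \infty$; this in turn converges in norm to $\phi(x)$ as $\epsilon \to 0$. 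A standard diagonal argument on the product net $(\alpha,\epsilon)$ then finishes the proof.
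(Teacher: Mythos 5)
Your proof is correct, and its first half is essentially the paper's: both arguments encode c.p.\ maps out of $F$ as positive elements of $F\otimes A$ (Choi/Arveson) and use density of $(F\otimes A)_{+}$ in $(F\otimes A)^{**}_{+}$ to get non-unital c.p.\ approximants. Where you diverge is the unitalization. The paper passes to convex combinations of the tail so that $\psi_{\lambda}(1_{F})\to 1_{A}$ \emph{in norm} (possible because norm and weak closures of a convex set agree), then takes a subnet on which $\psi_{\lambda}(1_{F})$ is invertible and applies the plain sandwich $\psi_{\lambda}(1_{F})^{-1/2}\psi_{\lambda}(\cdot)\psi_{\lambda}(1_{F})^{-1/2}$; norm convergence of the sandwiching elements is what makes the point-ultraweak limit survive. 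You instead force invertibility by adding $\epsilon 1_{A}$, restore exact unitality with the compensating c.p.\ term $\epsilon\,\tau(x)\,b_{\alpha,\epsilon}^{-1}$, and then take a second limit in $\epsilon$. This buys you freedom from the convex-combination step, but it obliges you to control products of merely weakly convergent nets, which is why your upgrade to \emph{ultrastrong} convergence with norm bounds via Kaplansky (and the functional-calculus continuity $b_{\alpha,\epsilon}^{-1/2}\to(1+\epsilon)^{-1/2}1_{A}$, valid on bounded sets of self-adjoint elements with spectra uniformly bounded away from $0$) is genuinely needed and correctly supplied; the concluding two-parameter limit is harmless since the point-ultraweak closure of the u.c.p.\ maps $F\to A$ is closed and contains each $(1+\epsilon)^{-1}\phi+\epsilon(1+\epsilon)^{-1}\tau(\cdot)1_{A}$. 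One small wrinkle: the opening claim that a blockwise argument reduces the \emph{lemma} to $F=\M_{k}$ is not quite right as stated, because unitality couples the blocks of $F=\bigoplus_{l}\M_{k_{l}}$; but this is cosmetic, since your actual argument only needs the Choi/Kaplansky step blockwise, and the $\epsilon$-correction applies verbatim to the total map with $\tau$ any state on $F$.
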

\begin{proof} We claim that c.p.\ maps $F \to A$ correspond bijectively to positive elements in $F \otimes A$. Indeed, for matrix algebras this is a well-known result of Arveson (cf.\ \cite[Proposition 1.5.12]{MR2391387}). The general case follows, since $F$ is a finite product of matrix algebras and for c.p.\ maps finite products and finite coproducts coincide. Since positive elements in $F \otimes A$ is ultraweakly dense in the positive elements in $F \otimes A^{**} \cong (F \otimes A)^{**}$, we see that c.p.\ maps $F \to A^{**}$ can be approximated by c.p.\ maps $F \to A$ in the point-ultraweak topology.

Let $\psi\colon F \to A^{**}$ be a u.c.p.\ map and let $\psi_{\lambda}\colon F \to A$ be a net of c.p.\ maps converging to $\psi$ in the point-ultraweak topology. Since $\psi_{\lambda}(1_{F}) \in A$ is a net converging to $1_{A}$ weakly, by passing to convex linear combinations, we may assume that $\psi_{\lambda}(1_{F})$ converges to $1_{A}$ in norm and passing to a subnet we may assume that $\psi_{\lambda}(1_{F})$ is invertible. Then $\tilde\psi_{\lambda}(x) \coloneqq \psi_{\lambda}(1_{F})^{-1/2}\psi_{\lambda}(x)\psi_{\lambda}(1_{F})^{-1/2}$, $x \in F$, gives the required approximation. 
\end{proof}

\begin{proof}[Proof of Theorem~\ref{thm app unital}]
For $n = 1$ and $A$ unital abelian, the claim follows from the classical proof of nuclearity for abelian algebras (cf.\ \cite[Proposition 2.4.2]{MR2391387}). 

For general $n$, first assume that $A$ is of the form 
	\begin{equation}\label{i elementary}
	\prod_{k \le n} \M_{k}(A_{k}),
	\end{equation}
where $A_{k}$, $k \le n$, are unital abelian \cast-algebras. Then the claim is easily deduced from the case $n = 1$.

Now we consider a general \nsub\- $A$. By Proposition~\ref{prop subhom}(\ref{i dd}) and Lemma~\ref{lem vN}, the bidual $A^{**}$ is of the form (\ref{i elementary}), hence $\id_{A^{**}}$ can be approximated by $n$-factorable maps $A^{**} \to A^{**}$ in point-norm topology. Then by Lemma~\ref{lem app}, $\id_{A}$ can be approximated by $n$-factorable maps in point-weak topology. Now  Lemma~\ref{lem convex} and \cite[Lemma 2.3.4]{MR2391387} completes the proof.
\end{proof}

As a corollary we obtain the following.
\begin{thm}\label{thm app} Let $A$ be a \cast-algebra and let $n \ge 1$ be an integer. Then $A$ is \nsub\- if and only if there exist nets of {\em $*$-homomorphisms} $\phi_{\alpha}\colon A \to F_{\alpha}$ and c.c.p.\ maps $\psi_{\alpha}\colon F_{\alpha} \to A$, with $F_{\alpha}$  finite-dimensional \nsub, such that for all $x \in A$,
	\begin{equation}
	||x - \psi_{\alpha} \circ \phi_{\alpha}(x)|| \to 0 \quad\text{as $\alpha \to \infty$}.
	\end{equation}
\end{thm}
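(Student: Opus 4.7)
The plan is to prove the two directions separately. The nontrivial ``forward'' direction ($A$ is \nsub\- $\Rightarrow$ the factorization exists) will be deduced from the unital case already settled in Theorem~\ref{thm app unital} via a unitization-plus-compression argument. The converse uses no more than the fact that the $\phi_{\alpha}$ are $*$-homomorphisms together with the permanence properties in Proposition~\ref{prop subhom}.

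For the forward direction, assume $A$ is \nsub. If $A$ is non-unital I would pass to the unitization $\tilde{A}$, which is again \nsub\- by Proposition~\ref{prop subhom}(\ref{i ext}) applied to $0 \to A \to \tilde{A} \to \C \to 0$. Theorem~\ref{thm app unital} then supplies finite-dimensional \nsub\- algebras $F_{\alpha}$, unital $*$-homomorphisms $\tilde\phi_{\alpha}\colon \tilde{A}\to F_{\alpha}$, and u.c.p.\ maps $\tilde\psi_{\alpha}\colon F_{\alpha}\to \tilde{A}$ with $\tilde\psi_{\alpha}\circ\tilde\phi_{\alpha}\to\id_{\tilde{A}}$ in the point-norm topology. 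Restriction gives a $*$-homomorphism $\phi_{\alpha}\coloneqq\tilde\phi_{\alpha}|_{A}\colon A \to F_{\alpha}$, but $\tilde\psi_{\alpha}$ need not land inside $A$. I would remedy this by choosing a positive contractive approximate unit $(e_{\lambda})$ of $A$ and setting $\psi_{\alpha,\lambda}(y)\coloneqq e_{\lambda}^{1/2}\,\tilde\psi_{\alpha}(y)\,e_{\lambda}^{1/2}$. Since $A$ is an ideal in $\tilde{A}$, this map takes values in $A$, and the compression by $e_{\lambda}^{1/2}$ is c.c.p., hence so is $\psi_{\alpha,\lambda}$. For $x\in A$ one has $\psi_{\alpha,\lambda}\circ\phi_{\alpha}(x)\to e_{\lambda}^{1/2}xe_{\lambda}^{1/2}$ as $\alpha\to\infty$, followed by $e_{\lambda}^{1/2}xe_{\lambda}^{1/2}\to x$ as $\lambda\to\infty$; a standard diagonal reindexing on pairs $(\alpha,\lambda)$ produces a single net with the required property.

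For the converse, suppose nets $(\phi_{\alpha},\psi_{\alpha})$ as in the statement exist. I would form the product $*$-homomorphism $\Phi\coloneqq\prod_{\alpha}\phi_{\alpha}\colon A\to\prod_{\alpha}F_{\alpha}$. If $\Phi(a)=0$, then $\phi_{\alpha}(a)=0$, and therefore $\psi_{\alpha}(\phi_{\alpha}(a))=0$, for every $\alpha$; passing to the limit yields $a=0$, so $\Phi$ is injective. Hence $A$ embeds as a \cast-subalgebra of $\prod_{\alpha}F_{\alpha}$, which is \nsub\- by Proposition~\ref{prop subhom}(\ref{i prod}), and so $A$ itself is \nsub\- by Proposition~\ref{prop subhom}(\ref{i subalg}).

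I expect the only real work to be the unitization-plus-compression step. The delicate points are checking that $y\mapsto e_{\lambda}^{1/2}\tilde\psi_{\alpha}(y)e_{\lambda}^{1/2}$ is genuinely a c.c.p.\ map into $A$ (which uses the ideal property of $A\subseteq\tilde A$ together with $\|e_{\lambda}\|\le 1$) and that the doubly-indexed family can be collapsed to a single net. With Theorem~\ref{thm app unital} already available, everything else is formal.
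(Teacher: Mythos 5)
Your proposal is correct and follows essentially the same route as the paper: unitize, invoke Theorem~\ref{thm app unital}, restrict $\phi_{\alpha}$ and compress $\tilde\psi_{\alpha}$ by an approximate unit of $A$ for the forward direction, and embed $A$ into $\prod_{\alpha}F_{\alpha}$ via the $*$-homomorphisms $\phi_{\alpha}$ together with Proposition~\ref{prop subhom} for the converse. The only difference is cosmetic (you compress by $e_{\lambda}^{1/2}\cdot e_{\lambda}^{1/2}$ where the paper writes $e_{\beta}\cdot e_{\beta}$, and you spell out the injectivity and reindexing details the paper leaves implicit).
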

\begin{proof}
%($\Rightarrow$) Suppose that $A$ is \nsub. Then the unitization $A^{+}$ is \nsub, hence $\id_{A^{+}}$ can be approximated as in Theorem~\ref{thm app unital}. Now cut down by an approximate unit as in \cite[Exercise 2.3.4]{MR2391387}.
%
%($\Leftarrow$) Suppose that nets $\phi_{\alpha}\colon A \to F_{\alpha}$ and $\psi_{\alpha}\colon F_{\alpha} \to A$ with $F_{\alpha}$ \nsub\- give an approximation of $\id_{A}$. Then $A$ is a \cast-subalgebra of $\prod_{\alpha} F_{\alpha}$. By Proposition~\ref{prop subhom}(\ref{i prod} \& \ref{i subalg}), $A$ is \nsub.

($\Rightarrow$) The unitization $A^{+}$ is \nsub, hence $\id_{A^{+}}$ can be approximated as in Theorem~\ref{thm app unital}. Now restrict $\phi_{\alpha}$ to $A$ and replace $\psi_{\alpha}$ by $e_{\beta}\psi_{\alpha}e_{\beta}$, where $e_{\beta}$ is an approximate
unit in $A$ (cf.\ \cite[Exercise 2.3.4]{MR2391387}).

($\Leftarrow$) Clearly $A$ is a \cast-subalgebra of $\prod_{\alpha} F_{\alpha}$. By Proposition~\ref{prop subhom}(\ref{i prod} \& \ref{i subalg}), $A$ is \nsub.
\end{proof}

It turns out that much weaker approximation properties imply $n$-sub\-homogeneity. Our first result depends on the following.
\begin{thm}[{Choi, Tomyama, Smith}]\label{thm choi} Let $A$ and $B$ be \cast-algebras and let $n \ge 1$ be an integer. Then all $n$-positive maps $A \to B$ are completely positive if and only if $A$ or $B$ is \nsub.
\end{thm}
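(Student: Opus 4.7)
My plan is to handle the two directions separately, with the substantive work lying in $(\Leftarrow)$.

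For $(\Leftarrow)$, I would first treat the case when $B$ is \nsub. By Proposition~\ref{prop subhom}(\ref{i ab}), $B$ embeds in $\M_{n}(C)$ for some abelian $C = C_{0}(X)$. An $n$-positive map $\phi \colon A \to B$ composes with this inclusion to yield an $n$-positive map into $C_{0}(X, \M_{n})$, and complete positivity into $C_{0}(X, \M_{n})$ is detected pointwise by the evaluation $*$-homomorphisms at $x \in X$. Thus the task reduces to showing that any $n$-positive map $A \to \M_{n}$ is automatically completely positive, which is Choi's classical theorem, proved via the Choi-matrix correspondence between c.p.\ maps into $\M_{n}$ and positive functionals on $A \otimes \M_{n}$. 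The case when $A$ itself is \nsub is handled by an analogous argument using the embedding $A \subseteq \M_{n}(C)$, either by passing to the opposite algebra $A^{\opp}$ or by adapting the Stinespring-type dilation directly.

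For $(\Rightarrow)$, I would argue by contrapositive. Assume neither $A$ nor $B$ is \nsub, so each admits an irreducible representation of dimension at least $n+1$. Kadison's transitivity theorem, applied to such a representation of $A$ and compressed by a rank-$(n+1)$ projection in the representation Hilbert space, produces a surjective c.p.\ map $\sigma \colon A \twoheadrightarrow \M_{n+1}$. On the side of $B$, the corresponding central cover in $B^{**}$ provides a direct summand containing $\M_{n+1}$ as a corner, hence a c.p.\ embedding $\iota \colon \M_{n+1} \hookrightarrow B^{**}$. Invoking Choi's explicit counterexample $\Psi \colon \M_{n+1} \to \M_{n+1}$, which is $n$-positive but not $(n+1)$-positive, the composition $\iota \circ \Psi \circ \sigma \colon A \to B^{**}$ is $n$-positive but not c.p. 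Finally, approximating $\iota$ in the point-weak topology by c.p.\ maps $\iota_{\alpha} \colon \M_{n+1} \to B$, in the spirit of Lemma~\ref{lem app}, and using that complete positivity is point-weak closed, at least one $\iota_{\alpha} \circ \Psi \circ \sigma$ must be $n$-positive yet fail to be CP, yielding the required counterexample $A \to B$.

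The main obstacle is the case in $(\Leftarrow)$ when $A$ rather than $B$ is \nsub: the pointwise reduction to $\M_{n}$ via an abelian spectrum is no longer directly available, and one must adapt Choi's argument more delicately using the embedding $A \subseteq \M_{n}(C)$ together with the structure of positive elements in $\M_{k}(\M_{n}(C)) = \M_{kn}(C)$. This refinement, due to Tomiyama and Smith, is the core of the solution of the Choi conjecture and is precisely what the paper invokes rather than reproves.
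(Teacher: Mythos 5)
First, a point of reference: the paper does not actually prove Theorem~\ref{thm choi} --- its ``proof'' is a pointer to Choi, Tomiyama and Smith --- so any genuine argument you supply goes beyond what the paper records. Your sketch of the necessity ($\Rightarrow$) is essentially the Tomiyama--Smith argument and is sound in outline: compress an irreducible representation of $A$ of dimension $\ge n+1$ by a rank-$(n+1)$ projection, realise $\M_{n+1}$ as a corner of a direct summand of $B^{**}$ (which is a complete order embedding), insert Choi's $n$-positive, non-$(n+1)$-positive map, and push back into $B$ via the point-ultraweak approximation of Lemma~\ref{lem app}, using that $(n+1)$-positivity survives point-weak limits. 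The one detail you should not elide is that surjectivity of $\sigma$ is not what is needed: you must know that $\sigma_{n+1}$ maps the positive cone of $\M_{n+1}(A)$ (densely) onto that of $\M_{n+1}(\M_{n+1})$, which holds because $\pi\otimes\id_{\M_{n+1}}$ is again an irreducible representation of $\M_{n+1}(A)$ and the positive form of Kadison transitivity applies to it.

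The genuine gap is the sufficiency when $A$, rather than $B$, is \nsub. ``Passing to the opposite algebra'' produces nothing: $A^{\opp}$ is \nsub\ exactly when $A$ is, and there is no general duality converting an $n$-positive map $A\to B$ into one with \nsub\ \emph{range}; the trace duality that works between matrix algebras has no analogue for arbitrary $B$. You correctly flag this as the hard case but then leave it unproved. Note, however, that the paper's own machinery closes it cheaply, as the Remark following Theorem~\ref{thm sub} indicates: by Theorem~\ref{thm app} one has $\id_{A}=\lim_{\alpha}\psi_{\alpha}\circ\phi_{\alpha}$ with $\phi_{\alpha}\colon A\to F_{\alpha}$ a $*$-homomorphism, $\psi_{\alpha}$ c.c.p.\ and $F_{\alpha}$ a finite product of algebras $\M_{k}$ with $k\le n$; for an $n$-positive $\theta\colon A\to B$, each $\theta\circ\psi_{\alpha}$ is an $n$-positive map out of such a product, hence c.p.\ by Choi's theorem for domain $\M_{n}$, so $\theta=\lim_{\alpha}(\theta\circ\psi_{\alpha})\circ\phi_{\alpha}$ is c.p.\ as a point-norm limit of c.p.\ maps. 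This reduces both halves of the sufficiency to the two matrix-algebra cases Choi actually proved and avoids the delicate localisation over $\M_{n}(C_{0}(X))$ altogether; I would recommend replacing both your pointwise reduction and the missing $A$-side argument with it.
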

\begin{proof} Choi proved the sufficiency ($\Leftarrow$) for $A = \M_{n}(D)$ (cf.\ \cite[Theorem 8]{MR0315460}) and $B = \M_{n}(D)$ (cf.\ \cite[Theorem 7]{MR0315460}) with $D$ abelian and conjectured the necessity ($\Rightarrow$). A complete proof is obtained by Tomiyama (cf.\ \cite[Theorem 1.2]{MR680854}). The necessity was also proved by Smith (cf.\ \cite[Theorem 3.1]{MR686514}).
\end{proof}

\begin{thm}\label{thm sub} Let $A$ be a \cast-algebra and let $n \ge 1$ be an integer. Then the following are equivalent.
\begin{enumerate}
\item\label{item app} There exist nets  of $n$-positive maps $\phi_{\alpha}\colon A \to F_{\alpha}$ and $\psi_{\alpha}\colon F_{\alpha} \to A$, with $F_{\alpha}$ finite-dimensional \nsub, such that for all $x \in A$, 
	\begin{equation}
	||x - \psi_{\alpha} \circ \phi_{\alpha}(x)|| \to 0 \quad\text{as $\alpha \to \infty$}.
	\end{equation}
\item\label{item domain} All $n$-positive maps with domain $A$ are completely positive.
\item\label{item range} All $n$-positive maps with range $A$ are completely positive.
\item\label{item dom-range} All $n$-positive maps $A \to A$ are completely positive.
\item\label{item repn} The \cast-algebra $A$ is \nsub.
\end{enumerate}
\end{thm}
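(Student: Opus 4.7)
My plan is to establish all five equivalences via the cycle (\ref{item repn}) $\Rightarrow$ (\ref{item app}) $\Rightarrow$ (\ref{item domain}) $\Rightarrow$ (\ref{item dom-range}) $\Rightarrow$ (\ref{item repn}), supplemented by (\ref{item repn}) $\Rightarrow$ (\ref{item range}) $\Rightarrow$ (\ref{item dom-range}) so that (\ref{item range}) folds into the loop. Four of these implications are essentially immediate. For (\ref{item repn}) $\Rightarrow$ (\ref{item app}), the approximating nets produced by Theorem~\ref{thm app} consist of $*$-homomorphisms and c.c.p.\ maps, each of which is in particular $n$-positive. The implications (\ref{item repn}) $\Rightarrow$ (\ref{item domain}) and (\ref{item repn}) $\Rightarrow$ (\ref{item range}) are direct applications of the sufficiency direction of Theorem~\ref{thm choi}. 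The implications (\ref{item domain}) $\Rightarrow$ (\ref{item dom-range}) and (\ref{item range}) $\Rightarrow$ (\ref{item dom-range}) are trivial specializations to maps $A \to A$.

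For (\ref{item dom-range}) $\Rightarrow$ (\ref{item repn}), I would argue by contrapositive: if $A$ is not \nsub, then the necessity direction of Theorem~\ref{thm choi}, applied with both source and target equal to $A$, yields an $n$-positive map $A \to A$ that is not completely positive, contradicting (\ref{item dom-range}).

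The substantive step is (\ref{item app}) $\Rightarrow$ (\ref{item domain}). Let $\theta\colon A \to B$ be $n$-positive for some \cast-algebra $B$, and let $\phi_{\alpha}, \psi_{\alpha}$ be nets as in (\ref{item app}). Since $F_{\alpha}$ is \nsub, Theorem~\ref{thm choi} upgrades both $\phi_{\alpha}$ (range \nsub) and $\psi_{\alpha}$ (domain \nsub) from $n$-positive to completely positive. Now observe that the composition $\theta \circ \psi_{\alpha}\colon F_{\alpha} \to B$ is itself $n$-positive, because
\begin{equation}
(\theta \circ \psi_{\alpha}) \otimes \id_{\M_{n}} = (\theta \otimes \id_{\M_{n}}) \circ (\psi_{\alpha} \otimes \id_{\M_{n}})
\end{equation}
is a composition of positive maps (the first is positive by $n$-positivity of $\theta$, the second by complete positivity of $\psi_{\alpha}$). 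A second application of Theorem~\ref{thm choi}, this time using that the domain $F_{\alpha}$ of $\theta \circ \psi_{\alpha}$ is \nsub, promotes $\theta \circ \psi_{\alpha}$ to c.p. Composing with the c.p.\ map $\phi_{\alpha}$, each $\theta \circ \psi_{\alpha} \circ \phi_{\alpha}\colon A \to B$ is c.p. Since $\theta$ is bounded and $\psi_{\alpha} \circ \phi_{\alpha} \to \id_{A}$ point-norm, we get $\theta \circ \psi_{\alpha} \circ \phi_{\alpha} \to \theta$ point-norm, and the point-norm closedness of the set of c.p.\ maps forces $\theta$ to be c.p.

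I expect the main obstacle to be conceptual rather than technical: one must notice that Theorem~\ref{thm choi} can be invoked twice in the same argument, first to turn $\phi_{\alpha}$ and $\psi_{\alpha}$ into c.p.\ maps, and then again, after forming $\theta \circ \psi_{\alpha}$, to turn that composition into a c.p.\ map. Without this two-step ``absorption'' of $n$-subhomogeneity, the weaker $n$-positivity hypothesis on $\theta$ is not enough to propagate through the approximation. Once this point is isolated, every other implication is either trivial or a direct citation of Theorem~\ref{thm app} or Theorem~\ref{thm choi}.
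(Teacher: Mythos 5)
Your proposal is correct and follows essentially the same route as the paper: the key step (\ref{item app}) $\Rightarrow$ (\ref{item domain}) is carried out by exactly the same double application of Theorem~\ref{thm choi} (first to $\phi_{\alpha}$ and $\psi_{\alpha}$, then to the composite $\theta \circ \psi_{\alpha}$ with \nsub\ domain), and the remaining implications are handled by the same citations of Theorems~\ref{thm app} and~\ref{thm choi}. The only cosmetic difference is that you reach (\ref{item range}) directly from (\ref{item repn}) via Theorem~\ref{thm choi}, whereas the paper derives it from (\ref{item app}) by the symmetric argument; both are immediate.
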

\begin{proof} Let $\phi_{\alpha}\colon A \to F_{\alpha}$ and $\psi_{\alpha}\colon F_{\alpha} \to A$ be a $n$-positive approximation of $\id_{A}$ in point-norm topology, with $F_{\alpha}$ (finite-dimensional) \nsub. Let $\theta\colon A \to B$ be a $n$-positive map. Then $\theta \circ \psi_{\alpha} \colon F_{\alpha} \to B$ is a $n$-positive map with \nsub\- domain, hence c.p.\ map by Theorem~\ref{thm choi} and $\phi_{\alpha}\colon A \to F_{\alpha}$ is a $n$-positive map with \nsub\- range, hence also c.p. Since $\theta$ is the point-norm limit of $(\theta \circ \psi_{\alpha}) \circ \phi_{\alpha}$, we see that $\theta$ is c.p. Hence (\ref{item app}) $\Rightarrow$ (\ref{item domain}). Similarly (\ref{item app}) $\Rightarrow$ (\ref{item range}).

The implications (\ref{item domain}) $\Rightarrow$ (\ref{item dom-range}) and  (\ref{item range}) $\Rightarrow$ (\ref{item dom-range}) are clear and the implication (\ref{item dom-range}) $\Rightarrow$ (\ref{item repn}) is immediate from Theorem~\ref{thm choi}. Finally, the implication (\ref{item repn}) $\Rightarrow$ (\ref{item app}) follows from Theorem~\ref{thm app}. 
\end{proof}

\begin{rem} The sufficiency in Theorem~\ref{thm choi} can be deduced from the cases $A = \M_{n}$ (cf.\ \cite[Theorem 6]{MR0315460}) and $B = \M_{n}$ (cf.\ \cite[Theorem 5]{MR0315460}) using Theorem~\ref{thm sub}.
\end{rem}
Now we consider the contractive analogue.
\begin{lem} Let $\tau_{n}\colon \M_{n} \to \M_{n}$, $n \ge 1$, denote the transpose map and let $m \ge 1$. Then 
	\begin{equation}
	||\tau_{n} \otimes \id_{\M_{m}}\colon \M_{n} \otimes \M_{m} \to \M_{n} \otimes \M_{m}|| = \min\{m, n\}.
	\end{equation}
\end{lem}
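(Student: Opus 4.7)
The plan is to establish matching upper and lower bounds of $\min\{m,n\}$. For the lower bound, set $k \coloneqq \min\{m,n\}$ and consider the partial flip
\[
F \coloneqq \sum_{i,j=1}^{k} e_{ij} \otimes e_{ji} \in \M_{n} \otimes \M_{m},
\]
where $e_{ij}$ are the standard matrix units. Viewed as an operator on $\C^{n} \otimes \C^{m}$, $F$ acts as the tensor swap on the subspace $\C^{k} \otimes \C^{k}$ and vanishes on the orthogonal complement, so $\|F\| = 1$. Reindexing by $(p,q) \coloneqq (j,i)$ yields
\[
(\tau_{n} \otimes \id_{\M_{m}})(F) = \sum_{p,q=1}^{k} e_{pq} \otimes e_{pq},
\]
which is the rank-one operator $\Omega\Omega^{*}$ for the vector $\Omega \coloneqq \sum_{p=1}^{k} e_{p} \otimes e_{p}$, of norm $\|\Omega\|^{2} = k$. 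This forces $\|\tau_{n} \otimes \id_{\M_{m}}\| \geq k$.

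For the upper bound I would show $\|\tau_{n} \otimes \id_{\M_{m}}\| \leq m$ directly and deduce $\leq n$ by symmetry. For the first, take $X = (x_{ij})_{i,j=1}^{m} \in \M_{m}(\M_{n})$ with $\|X\| \leq 1$; compressing to coordinates shows $\|x_{ij}\| \leq 1$, and hence $\|\tau_{n}(x_{ij})\| \leq 1$ since $\tau_{n}$ is an isometry. Then for $\xi, \eta \in (\C^{n})^{m}$, a direct matrix expansion combined with two applications of the Cauchy--Schwarz estimate $\sum_{i} \|\xi_{i}\| \leq \sqrt{m}\,\|\xi\|$ gives
\[
\bigl|\langle (\tau_{n} \otimes \id_{\M_{m}})(X)\xi, \eta\rangle\bigr| \leq \sum_{i,j} \|\xi_{j}\|\|\eta_{i}\| \leq m\,\|\xi\|\,\|\eta\|,
\]
yielding $\|(\tau_{n} \otimes \id_{\M_{m}})(X)\| \leq m$.

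To get the bound $\leq n$, I would exploit the symmetry between the two tensor factors. Under the standard identification $\M_{n} \otimes \M_{m} \cong \M_{nm}$, the full transpose decomposes as $\tau_{nm} = \tau_{n} \otimes \tau_{m}$; together with $\tau_{m}^{2} = \id_{\M_{m}}$ this gives
\[
\tau_{n} \otimes \id_{\M_{m}} = \tau_{nm} \circ (\id_{\M_{n}} \otimes \tau_{m}).
\]
Since $\tau_{nm}$ is an isometry on $\M_{nm}$, we obtain $\|\tau_{n} \otimes \id_{\M_{m}}\| = \|\id_{\M_{n}} \otimes \tau_{m}\|$, and the previous bilinear argument applied with the two factors interchanged gives $\|\id_{\M_{n}} \otimes \tau_{m}\| \leq n$. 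The only genuinely nontrivial step is verifying the identity $\tau_{nm} = \tau_{n} \otimes \tau_{m}$, which is a routine unwinding of the standard identification; the remaining work is indexed bookkeeping.
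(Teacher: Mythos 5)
Your proof is correct, and its key step is the same as the paper's. The paper's own argument is much shorter: it declares the case $n \le m$ (i.e.\ that $\|\tau_{n}\otimes\id_{\M_{m}}\| = n$ when $m \ge n$) to be well known, and handles the remaining case via the identity $(\tau_{n}\otimes\tau_{m})\circ(\tau_{n}\otimes\id_{\M_{m}}) = \id_{\M_{n}}\otimes\tau_{m}$ together with the fact that $\tau_{n}\otimes\tau_{m}$ is the isometry $\tau_{nm}$ --- exactly the factorization you use, just rearranged. What you add is a self-contained proof of the ``well-known'' half: the lower bound via the partial flip $F$, whose image under $\tau_{n}\otimes\id_{\M_{m}}$ is the rank-one operator $\Omega\Omega^{*}$ of norm $k=\min\{m,n\}$, and the upper bound $\le m$ via the crude estimate that an $m\times m$ block matrix with contractive entries has norm at most $m$. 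Both computations check out ($F$ is the swap on $\C^{k}\otimes\C^{k}$ extended by zero, hence norm one, and the double Cauchy--Schwarz does produce the factor $m$), and the identity $\tau_{nm}=\tau_{n}\otimes\tau_{m}$ is indeed routine index bookkeeping. The only structural difference is that you establish the lower bound $\ge\min\{m,n\}$ uniformly in $m,n$ and invoke the transpose factorization only to transfer one of the two upper bounds, whereas the paper uses it to reduce the entire statement for $n>m$ to the cited case $n\le m$; your version is more elementary in that it cites nothing, at the cost of a page of standard estimates the paper chose to suppress.
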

\begin{proof} For $n \le m$, this is well-known. The general case follows from the identity 
	\begin{equation}
	(\tau_{n} \otimes \tau_{m}) \circ (\tau_{n} \otimes \id_{\M_{m}}) = \id_{\M_{n}} \otimes \tau_{m},
	\end{equation}
since $\tau_{n} \otimes \tau_{m}$ can be identified with $\tau_{nm}$, hence an isometry.	
\end{proof}

\begin{cor} Let $n \ge 2$ be an integer. Then the map
	\begin{equation}
	\frac{1}{n-1}\tau_{n}\colon \M_{n} \to \M_{n}
	\end{equation}
is $(n-1)$-contractive, but not $n$-contractive.	
\end{cor}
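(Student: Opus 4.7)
The plan is to apply the preceding lemma directly. For the map $\phi \coloneqq \frac{1}{n-1}\tau_{n}\colon \M_{n} \to \M_{n}$ and any $m \ge 1$, I have
\begin{equation}
\|\phi \otimes \id_{\M_{m}}\| = \frac{1}{n-1}\|\tau_{n} \otimes \id_{\M_{m}}\| = \frac{\min\{m,n\}}{n-1}
\end{equation}
by the lemma.

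To establish $(n-1)$-contractivity, I would take $m \le n-1$, in which case $\min\{m,n\} = m \le n-1$, so $\|\phi \otimes \id_{\M_{m}}\| \le 1$. To establish that $\phi$ is not $n$-contractive, I would take $m = n$, in which case $\min\{m,n\} = n$, so $\|\phi \otimes \id_{\M_{n}}\| = \frac{n}{n-1} > 1$.

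There is no real obstacle here; the corollary is essentially a rescaling of the lemma and the only thing to verify is the arithmetic at the boundary cases $m = n-1$ and $m = n$. The whole argument fits in two or three lines.
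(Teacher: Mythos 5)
Your proposal is correct and is exactly the argument the paper intends: the corollary is stated without proof precisely because it follows from the lemma by the rescaling and boundary-case check you carry out. Nothing is missing.
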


As a corollary, we obtain the following contractive analogue of Theorem~\ref{thm choi}. Note that we have only one of the directions (cf.\ \cite[Theorem C]{MR0397423}).
\begin{thm}\label{thm choi c} Let $A$ and $B$ be \cast-algebras and let $n \ge 1$ be an integer. If $A$ and $B$ both admit irreducible representations of dimension $\ge (n+1)$, then there exists an $n$-contractive map $A \to B$ which is not $(n+1)$-contractive.
\end{thm}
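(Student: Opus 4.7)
The strategy is to build $F = \psi \circ \tau \circ \phi\colon A \to B$, where $\tau = \tfrac{1}{n}\tau_{n+1}$ is the scaled transpose furnished by the preceding corollary (so $\|\tau \otimes \id_{\M_n}\| = 1$ and $\|\tau \otimes \id_{\M_{n+1}}\| = (n+1)/n$), and $\phi\colon A \to \M_{n+1}$ and $\psi\colon \M_{n+1} \to B$ are c.c.p.\ maps chosen so that the $(n+1)$-level norm inflation of $\tau$ is preserved by the composition. Pre- and post-composing with c.c.p.\ maps automatically makes $F$ $n$-contractive, so the real task is to exhibit the $(n+1)$-level inflation.

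For $\phi$, fix an irreducible representation $\pi_A\colon A \to B(H_A)$ with $\dim H_A \ge n+1$, pick an $(n+1)$-dimensional subspace $K_A \subset H_A$ with projection $p_A$, and set $\phi(a) = p_A \pi_A(a) p_A$, identifying the range with $\M_{n+1}$. Apply Kadison's transitivity theorem to the matrix amplification $\pi_A \otimes \id_{\M_m}\colon A \otimes \M_m \to B(H_A \otimes \C^m)$, which remains irreducible (its commutant in $B(H_A \otimes \C^m)$ is $\pi_A(A)' \otimes \M_m' = \C$), to conclude that $\phi$ is a \emph{complete quotient map}: for every $m \ge 1$ and every $y \in \M_{n+1} \otimes \M_m$ there exists $x \in A \otimes \M_m$ with $(\phi \otimes \id_{\M_m})(x) = y$ and $\|x\|$ arbitrarily close to $\|y\|$.

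For $\psi$, the obstacle is that $B$ need not contain $\M_{n+1}$ as a sub-\cast-algebra, so I pass to the bidual. The representation $\pi_B$ extends to a normal surjective $*$-homomorphism $\tilde\pi_B\colon B^{**} \to B(H_B)$, exhibiting $B(H_B)$ as a W$^*$-direct summand of $B^{**}$ via a normal $*$-embedding $B(H_B) \hookrightarrow B^{**}$. Composing this with the inclusion $\M_{n+1} \cong B(K_B) \hookrightarrow B(H_B)$ for an $(n+1)$-dimensional subspace $K_B \subset H_B$ yields an injective $*$-homomorphism $\sigma\colon \M_{n+1} \to B^{**}$, hence a complete isometry. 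A non-unital c.c.p.\ variant of Lemma~\ref{lem app} then produces a net of c.c.p.\ maps $\psi_\alpha\colon \M_{n+1} \to B$ converging to $\sigma$ in the point-ultraweak topology.

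Setting $F_\alpha = \psi_\alpha \circ \tau \circ \phi$, each $F_\alpha$ is $n$-contractive. For failure of $(n+1)$-contractivity, use the preceding lemma to choose $y \in \M_{n+1} \otimes \M_{n+1}$ with $\|y\| = 1$ and $\|(\tau \otimes \id_{\M_{n+1}})(y)\| = (n+1)/n$, and then lift $y$ via the complete quotient property of $\phi$ to $x \in A \otimes \M_{n+1}$ with $\|x\|$ arbitrarily close to $1$. Then $(F_\alpha \otimes \id_{\M_{n+1}})(x) \to (\sigma \otimes \id_{\M_{n+1}})((\tau \otimes \id_{\M_{n+1}})(y))$ ultraweakly, whose norm equals $(n+1)/n$ by complete isometricity of $\sigma$; weak-$*$ lower semicontinuity of the norm then forces $\|(F_\alpha \otimes \id_{\M_{n+1}})(x)\| > \|x\|$ for $\alpha$ sufficiently large. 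The main technical point is the non-unital c.c.p.\ analog of Lemma~\ref{lem app}: the normalization trick in its proof uses $\tilde\psi_\lambda(1_F) \to 1_A$ in norm, which here must be replaced by a more delicate argument via Choi's parameterization of c.p.\ maps $\M_{n+1} \to B$ by positive elements of $\M_{n+1}(B)$ together with ultraweak density in $\M_{n+1}(B)^{**} \cong \M_{n+1}(B^{**})$.
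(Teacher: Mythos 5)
Your proof is correct and follows essentially the route the paper intends: the paper's proof of Theorem~\ref{thm choi c} simply asserts that Smith's argument for \cite[Theorem 3.1]{MR686514} applies with the scaled transpose $\frac{1}{n}\tau_{n+1}$ from the preceding corollary as the seed, and that is precisely the compression--transpose--embedding scheme you carry out (Kadison transitivity at each matrix level for the complete quotient property on the domain side, the $(n+1)/n$ norm inflation in the middle). The one place you supply your own machinery is on the range side, where you pass to $B^{**}$, approximate the embedding $\M_{n+1}\hookrightarrow B^{**}$ point-ultraweakly by c.c.p.\ maps into $B$, and invoke weak-$*$ lower semicontinuity of the norm; this is a legitimate and correct way to close that step, with the required non-unital density statement being standard.
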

\begin{proof} The proof of \cite[Theorem 3.1]{MR686514} applies. See also \cite[Lemma 1.1 \& Theorem 1.2]{MR680854}
\end{proof}

\begin{thm}\label{thm sub-contr}
Let $A$ be a \cast-algebra and let $n \ge 1$ be an integer. Then the following are equivalent.
\begin{enumerate}
\item\label{item app c} There exist nets of $n$-contractive maps $\phi_{\alpha}\colon A \to F_{\alpha}$ and $(n+1)$-contractive maps $\psi_{\alpha}\colon F_{\alpha} \to A$, with $F_{\alpha}$ finite-dimensional \nsub, such that for all $x \in A$, 
	\begin{equation}
	||x - \psi_{\alpha} \circ \phi_{\alpha}(x)|| \to 0 \quad\text{as $\alpha \to \infty$}.
	\end{equation}
\item\label{item range +} All $n$-contractive maps with range $A$ are $(n+1)$-contractive.
\item\label{item range c} All $n$-contractive maps with range $A$ are completely contractive.
\item\label{item dom-range +} All $n$-contractive maps $A \to A$ are $(n+1)$-contractive.
\item\label{item dom-range c} All $n$-contractive maps $A \to A$ are completely contractive.
\item\label{item repn c} The \cast-algebra $A$ is \nsub.%All irreducible representations of $A$ have dimension $\le n$.
\end{enumerate}
\end{thm}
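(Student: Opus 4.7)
The plan is to close a cycle of implications around (\ref{item repn c}), using three ingredients: Theorem~\ref{thm app}, Theorem~\ref{thm choi c}, and the theorem of Smith \cite{MR686514} that any bounded linear map $\phi\colon X \to F$ into an \nsub{} \cast-algebra $F$ satisfies $\|\phi\|_{cb} = \|\phi\|_{n}$. Both implications departing from (\ref{item repn c}) should be immediate: for (\ref{item repn c}) $\Rightarrow$ (\ref{item app c}), invoke Theorem~\ref{thm app} and note that the resulting $*$-homomorphisms are completely contractive (hence $n$-contractive) and the c.c.p.\ maps are completely contractive (hence $(n+1)$-contractive); for (\ref{item repn c}) $\Rightarrow$ (\ref{item range c}), apply Smith's cb-norm equality to any $n$-contractive map into the \nsub{} algebra $A$.

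The key nontrivial step is (\ref{item app c}) $\Rightarrow$ (\ref{item range +}). Given an $n$-contractive $\theta\colon B \to A$, I would form the approximants $\psi_{\alpha} \circ \phi_{\alpha} \circ \theta \to \theta$ in the point-norm topology. Since $\phi_{\alpha} \circ \theta\colon B \to F_{\alpha}$ is $n$-contractive with range in the \nsub{} algebra $F_{\alpha}$, Smith's theorem forces it to be completely contractive, and in particular $(n+1)$-contractive. Composing with the $(n+1)$-contractive $\psi_{\alpha}$ then yields that each $\psi_{\alpha} \circ \phi_{\alpha} \circ \theta$ is $(n+1)$-contractive, and passing to the point-norm limit on $B \otimes \M_{n+1}$ (which holds entrywise since there are only finitely many matrix entries) transfers $(n+1)$-contractivity to $\theta$ itself.

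The remaining implications should be routine. The chains (\ref{item range c}) $\Rightarrow$ (\ref{item range +}) $\Rightarrow$ (\ref{item dom-range +}) and (\ref{item range c}) $\Rightarrow$ (\ref{item dom-range c}) $\Rightarrow$ (\ref{item dom-range +}) are trivial. To close the cycle, each of (\ref{item range +}), (\ref{item dom-range +}), and (\ref{item dom-range c}) implies (\ref{item repn c}) by the contrapositive of Theorem~\ref{thm choi c}, applied with $B = A$ where needed: if $A$ admitted an irreducible representation of dimension $\ge n+1$, Theorem~\ref{thm choi c} would furnish an $n$-contractive map into $A$ which fails to be $(n+1)$-contractive, contradicting the hypothesis. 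The principal obstacle is that the converse of Theorem~\ref{thm choi c} is not available, so unlike in the positive case (Theorem~\ref{thm sub}) one cannot argue purely from the contractive Choi analogue; it is Smith's cb-norm equality that substitutes for this missing direction and drives the forward implications.
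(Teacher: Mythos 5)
Your proposal is correct and follows essentially the same route as the paper: (\ref{item repn c}) $\Rightarrow$ (\ref{item app c}) via Theorem~\ref{thm app}, (\ref{item repn c}) $\Rightarrow$ (\ref{item range c}) via Smith's result that maps into \nsub{} algebras satisfy $\|\cdot\|_{cb}=\|\cdot\|_{n}$, (\ref{item dom-range +}) $\Rightarrow$ (\ref{item repn c}) via the contrapositive of Theorem~\ref{thm choi c}, and the trivial downward implications, with (\ref{item app c}) $\Rightarrow$ (\ref{item range +}) argued exactly as the paper intends by its reference to the proof of Theorem~\ref{thm sub}. Your explicit write-out of that last step (upgrading $\phi_{\alpha}\circ\theta$ to completely contractive via Smith's theorem, then composing with the $(n+1)$-contractive $\psi_{\alpha}$ and passing to the point-norm limit) is precisely the intended argument, and your closing remark correctly identifies why Smith's cb-norm equality must stand in for the unavailable converse of Theorem~\ref{thm choi c}.
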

\begin{proof} We prove the implications
	\begin{equation}
	\xymatrix{
	(\ref{item app c})\ar@{=>}[d] & (\ref{item repn c}) \ar@{=>}[l] \ar@{=>}[r] \ar@{=>}[dl]& (\ref{item range c})\ar@{=>}[d]\\
	(\ref{item range +}) \ar@{=>}[r]& (\ref{item dom-range +})\ar@{=>}[u] & (\ref{item dom-range c})\ar@{=>}[l]
	}.
	\end{equation}

The implications (\ref{item range c}) $\Rightarrow$ (\ref{item dom-range c}), (\ref{item dom-range c}) $\Rightarrow$ (\ref{item dom-range +}) and (\ref{item range +}) $\Rightarrow$ (\ref{item dom-range +}) are clear. The implication (\ref{item dom-range +}) $\Rightarrow$ (\ref{item repn c}) follows from Theorem~\ref{thm choi c} and the implication (\ref{item repn c}) $\Rightarrow$ (\ref{item app c}) follows from Theorem~\ref{thm app}. The implication (\ref{item repn c}) $\Rightarrow$ (\ref{item range c}) follows from \cite[Theorem 2.10]{MR686514}. Since (\ref{item range c}) $\Rightarrow$ (\ref{item range +}) is clear, we also have (\ref{item repn c}) $\Rightarrow$ (\ref{item range +}).

Finally, the implication (\ref{item app c}) $\Rightarrow$ (\ref{item range +}) is analogous to the proof of Theorem~\ref{thm sub}((\ref{item app}) $\Rightarrow$ (\ref{item range})).
\end{proof}

Compare with the Loebl conjecture \cite{MR0397423}, solved affirmatively by Huruya-Tomiyama \cite{MR715564} and Smith \cite{MR686514}. 
\begin{rem}\label{rem loebl} Note that the statement
\begin{enumerate}
\item[(7)] All $n$-contractive maps with {\em domain} $A$ are $(n+1)$-contractive.
\end{enumerate}
is {\em not} equivalent to the conditions in Theorem~\ref{thm sub-contr} in general (cf.\ \cite[Theorem C]{MR0397423}).
\end{rem}
\section{The Abelian Case}\label{sec ab}

Specialising to $n = 1$ in Theorem~\ref{thm sub}, we obtain the following. \begin{thm}\label{thm cp} 
Let $A$ be a \cast-algebra. Suppose that there exist nets of contractive positive maps $\phi_{\alpha}\colon A \to F_{\alpha}$ and $\psi_{\alpha}\colon F_{\alpha} \to A$, with $F_{\alpha}$ {\em abelian}, such that for all $x \in A$,
	\begin{equation}
	||x - \psi_{\alpha} \circ \phi_{\alpha}(x)|| \to 0 \quad\text{as $\alpha \to \infty$}.
	\end{equation}	
Then $A$ is abelian. 
\end{thm}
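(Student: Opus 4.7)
The plan is to reduce the statement to the $n = 1$ case of Theorem~\ref{thm sub}, whose intermediate algebras are required to be finite-dimensional abelian. The hypothesis of Theorem~\ref{thm cp} differs only in that the $F_{\alpha}$ are allowed to be arbitrary abelian \cast-algebras. Accordingly, the argument splits into two steps: (a) upgrade the given positive contractive maps to c.c.p., and (b) refine the factorisation so that each $F_{\alpha}$ is replaced by a finite-dimensional abelian algebra $\C^{m}$.

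For step (a), I invoke the classical principle that any positive linear map with an abelian domain or codomain is automatically completely positive. A positive map $\phi\colon A \to C_{0}(X)$ is determined pointwise by the positive functionals $a \mapsto \phi(a)(x)$, and positive functionals are c.p.; dually, positive maps from an abelian \cast-algebra are c.p.\ by Stinespring's theorem. Combined with the contractivity hypothesis, each $\phi_{\alpha}$ and $\psi_{\alpha}$ is thus c.c.p.

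For step (b), I use that each $F_{\alpha} \cong C_{0}(X_{\alpha})$ is abelian, hence nuclear. By the standard partition-of-unity argument (cf.\ \cite[Proposition 2.4.2]{MR2391387}), for any finite set $Y \subset F_{\alpha}$ and $\epsilon > 0$ there exist a $*$-homomorphism $E\colon F_{\alpha} \to \C^{m}$ of the form $f \mapsto (f(p_{j}))_{j=1}^{m}$ and a c.c.p.\ map $R\colon \C^{m} \to F_{\alpha}$ of the form $(a_{j}) \mapsto \sum_{j} a_{j} f_{j}$, for a suitable partition of unity $\{f_{j}\}$ on $X_{\alpha}$, satisfying $\|y - R(E(y))\| < \epsilon$ for every $y \in Y$. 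Applied with $Y = \{\phi_{\alpha}(x_{i})\}$ and combined with $\|x_{i} - \psi_{\alpha}(\phi_{\alpha}(x_{i}))\| < \epsilon$, this yields c.c.p.\ maps $E \circ \phi_{\alpha}\colon A \to \C^{m}$ and $\psi_{\alpha} \circ R\colon \C^{m} \to A$ that factor through the finite-dimensional abelian \cast-algebra $\C^{m}$ and approximate $\id_{A}$ on $\{x_{i}\}$ to within $2\epsilon$.

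Indexing these approximations by pairs (finite subset, $\epsilon$) produces a net satisfying condition~(\ref{item app}) of Theorem~\ref{thm sub} with $n = 1$, and the implication (\ref{item app})~$\Rightarrow$~(\ref{item repn}) of that theorem yields that $A$ is $1$-subhomogeneous, i.e., abelian. I do not foresee a substantive obstacle: both ingredients above---that positivity upgrades to complete positivity in the presence of an abelian factor, and that $C_{0}(X)$ admits finite-dimensional c.c.p.\ factorisations---are classical, and the real content has already been absorbed into Theorem~\ref{thm sub} via Theorem~\ref{thm choi}.
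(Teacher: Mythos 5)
Your proposal is correct. It follows the same primary route as the paper, which introduces this theorem with the words ``Specialising to $n=1$ in Theorem~\ref{thm sub}'': you verify hypothesis~(\ref{item app}) of Theorem~\ref{thm sub} and invoke (\ref{item app})~$\Rightarrow$~(\ref{item repn}). Your step~(b), replacing an arbitrary abelian $F_{\alpha}$ by $\C^{m}$ via a partition of unity, is a legitimate way to meet the finite-dimensionality requirement as literally stated in Theorem~\ref{thm sub}(\ref{item app}); note, though, that it is not strictly needed, since the proof of (\ref{item app})~$\Rightarrow$~(\ref{item dom-range}) only uses that the $F_{\alpha}$ are \nsub\ (the paper signals this by parenthesising ``(finite-dimensional)'' there), so positivity of $\phi_{\alpha}$ and $\psi_{\alpha}$ through any abelian $F_{\alpha}$ already suffices. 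Be aware that the paper also writes out a genuinely different, self-contained ``alternative proof'': after upgrading to c.c.p.\ exactly as in your step~(a) and unitizing, it observes that the canonical map $\iota\colon A \to A^{\opp}$ is a point-norm limit of the c.c.p.\ maps $\psi_{\alpha}^{\opp}\circ\phi_{\alpha}$ (using $F_{\alpha}\cong F_{\alpha}^{\opp}$), hence c.c.p.; since $\iota$ sends unitaries to unitaries, its multiplicative domain is all of $A$, so $\iota$ is a $*$-homomorphism and $A$ is abelian. That argument avoids the Choi--Tomiyama--Smith machinery underlying Theorem~\ref{thm sub} entirely, whereas your route reuses the general $n$-subhomogeneous framework already established; both are valid, and yours is the more economical given that Theorem~\ref{thm sub} is available.
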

We give an alternative proof.

\begin{proof} First note that $\phi_{\alpha}$ and $\psi_{\alpha}$ are c.c.p.\ (cf.\ \cite[Theorem 3 \& 4]{MR0069403}).

Unitizing if necessary, we may assume that $A$ is unital. Let $A^{\opp}$ denote the opposite algebra of $A$. Then the canonical map $\iota\colon A \to A^{\opp}$ is a pointwise limit of c.c.p.\ maps $\psi_{\alpha}^{\opp}\circ\phi_{\alpha}\colon A \to F_{\alpha} \cong F_{\alpha}^{\opp} \to A^{\opp}$, hence a c.c.p.\ map. Moreover, since $\iota$ sends unitaries to unitaries, its multiplicative domain is the whole of $A$. It follows that $\iota$ is a $*$-homomorphism and $A$ is abelian.\footnote{Walter's $3 \times 3$ trick shows that if the canonical map $\iota\colon A \to A^{\opp}$ is $3$-positive then $A$ is abelian (cf.\ \cite{MR1963763}). %On the other hand, if $A$ is abelian, then any positive map from $A$, in particular the identity map $A \to A^{\opp}$,  is completely positive.
}
\end{proof}

In fact, the following is true.
\begin{thm}\label{thm takesaki proof} Let $\theta\colon A\to B$ be an {\em injective} $*$-homomorphism. Suppose that there exist nets of contractive maps $\phi_{\alpha}\colon A \to F_{\alpha}$ and $2$-contractive maps  $\psi_{\alpha}\colon F_{\alpha} \to B$, with $F_{\alpha}$ {\em abelian}, such that for all $x \in A$,
	\begin{equation}\label{2-limit}
	||(\theta - \psi_{\alpha} \circ \phi_{\alpha})(x)|| \to 0 \quad\text{as $\alpha \to \infty$}.
	\end{equation}
Then $A$ is abelian. 
\end{thm}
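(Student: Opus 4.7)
The plan is to extend the opposite-algebra strategy of the proof of Theorem~\ref{thm cp}, using only the $2$-contractive hypothesis on $\psi_{\alpha}$. The goal is to show that the flip map $\tau \coloneqq \tau_{\M_{2}} \otimes \id_{A}\colon \M_{2}(A) \to \M_{2}(A)$, $(a_{ij}) \mapsto (a_{ji})$, is an isometry. Once this is established, the Lemma giving $\|\tau_{2} \otimes \id_{\M_{2}}\| = 2$, combined with a little von Neumann algebra structure theory, will rule out $A$ being non-abelian.

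To set things up, I would first note that each $\phi_{\alpha}$ is automatically completely contractive, since any contractive linear map into an abelian \cast-algebra is completely contractive---the underlying reason being that bounded functionals on a \cast-algebra satisfy $\|\cdot\|_{\mathrm{cb}} = \|\cdot\|$, and a map into $C(X)$ is a continuous family of functionals. Secondly, since $F_{\alpha} = F_{\alpha}^{\opp}$ and the canonical isomorphism $\M_{2}(B^{\opp}) \cong \M_{2}(B)^{\opp}$ identifies $(b_{ij})$ with $(b_{ji})$, one checks that $\psi_{\alpha}$ viewed as $\psi_{\alpha}^{\opp}\colon F_{\alpha} \to B^{\opp}$ remains $2$-contractive; the key point is that transposition on $\M_{2}(F_{\alpha})$ is isometric when $F_{\alpha}$ is abelian. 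Composing yields a $2$-contractive map $\psi_{\alpha}^{\opp} \circ \phi_{\alpha}\colon A \to B^{\opp}$ whose pointwise limit, by hypothesis (\ref{2-limit}), is $\theta\colon A \to B^{\opp}$. Hence $\theta\colon A \to B^{\opp}$ is $2$-contractive.

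Since $\theta\colon A \to B$ is an injective $*$-homomorphism, it is completely isometric. Chasing through the identifications
\[
\|(\theta(a_{ij}))\|_{\M_{2}(B^{\opp})} = \|(\theta(a_{ji}))\|_{\M_{2}(B)} = \|(a_{ji})\|_{\M_{2}(A)},
\]
the $2$-contractivity of $\theta\colon A \to B^{\opp}$ translates into
\[
\|(a_{ji})\|_{\M_{2}(A)} \le \|(a_{ij})\|_{\M_{2}(A)} \qquad \text{for all } (a_{ij}) \in \M_{2}(A),
\]
and swapping $i$ and $j$ gives the reverse inequality. So $\tau$ is an isometry on $\M_{2}(A)$, and, taking biduals, $\tau$ is also an isometry on $\M_{2}(A^{**}) = \M_{2}(A)^{**}$.

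Finally, suppose for contradiction that $A$ is not abelian. Then the von Neumann algebra $A^{**}$ is non-abelian, and a standard argument---find non-commuting self-adjoints, use a spectral projection of one to produce a non-zero off-diagonal element in the other, then polar-decompose---produces two orthogonal equivalent non-zero projections in $A^{**}$ and hence an embedding of $\M_{2}$ as a (possibly non-unital) \cast-subalgebra of $A^{**}$. The induced inclusion $\M_{2}(\M_{2}) \hookrightarrow \M_{2}(A^{**})$ is isometric, forcing $\tau$ restricted to $\M_{2}(\M_{2})$ to be an isometry, contradicting the Lemma's computation that its norm is $2$. Hence $A$ is abelian. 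The main subtleties are the bookkeeping of identifications between matrix algebras over opposite algebras and their transposes, and the observation that the isometric property of $\tau$ on $\M_{2}(A)$ persists to the bidual.
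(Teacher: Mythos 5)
Your proof is correct, but it takes a genuinely different route from the paper's. The paper deduces the theorem from Takesaki's theorem (Theorem~\ref{thm takesaki}): it shows $\|x\|_{\min} \le \|x\|_{\lambda}$ for every $x \in A \otimes \M_{2}$ by pushing $x$ through the factorisation, using that $\phi_{\alpha} \otimes_{\lambda} \id_{\M_{2}}$ is contractive for the injective Banach norm, that $\|\cdot\|_{\lambda} = \|\cdot\|_{\min}$ on $F_{\alpha} \otimes \M_{2}$ because $F_{\alpha}$ is abelian, and that $\psi_{\alpha} \otimes_{\min} \id_{\M_{2}}$ is contractive by $2$-contractivity; Takesaki's theorem then forces $A$ to be abelian since $\M_{2}$ is not. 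You instead extend the opposite-algebra argument of Theorem~\ref{thm cp}, replacing the multiplicative-domain step (which is unavailable without positivity) by a norm argument: the flip $(a_{ij}) \mapsto (a_{ji})$ is isometric on $\M_{2}(A)$, hence on $\M_{2}(A^{**})$, and a non-abelian $A^{**}$ would contain a copy of $\M_{2}$ on which the flip restricts to $\tau_{2} \otimes \id_{\M_{2}}$, of norm $2$ by the Lemma in Section~\ref{sec sub}. Your two auxiliary facts --- that contractions into abelian \cast-algebras are automatically completely contractive, and that transposition is isometric on $\M_{2}(F_{\alpha})$ for abelian $F_{\alpha}$ --- are precisely the points where the paper invokes the equality of $\|\cdot\|_{\lambda}$ and $\|\cdot\|_{\min}$ over $F_{\alpha}$, so the hypotheses are consumed in a parallel way; the difference lies in the endgame. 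The paper's route is shorter given Takesaki's theorem and mirrors the standard ``nuclear implies tensor-nuclear'' argument, whereas yours avoids Takesaki's theorem entirely at the cost of the bidual/matrix-unit argument, and is closer in spirit to the transpose-based obstructions of Tomiyama and Smith cited for Theorem~\ref{thm choi c}. All the steps you flag as subtle (the identification $\M_{2}(B^{\opp}) \cong \M_{2}(B)^{\opp}$ via the transpose, the passage of the isometry to the bidual, and the isometric embedding of $\M_{2}(\M_{2})$ into $\M_{2}(A^{**})$) check out.
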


Our main tool is the following beautiful theorem of Takesaki. Let $A_{1}$ and $A_{2}$ be \cast-algebras. The {\em injective cross-norm} of $A_{1}$ and $A_{2}$ is defined by
	\begin{equation}
	||x||_{\lambda} \coloneqq \sup|(\varphi_{1} \otimes \varphi_{2})(x)|
	\end{equation}
where $\varphi_{1}$ and $\varphi_{2}$ run over all contractive linear functionals of $A_{1}$ and $A_{2}$, respectively. The {\em injective \cast-cross-norm} of $A_{1}$ and $A_{2}$ is defined by
	\begin{equation}
	||x||_{\min} \coloneqq \sup||(\pi_{1} \otimes \pi_{2})(x)||
	\end{equation}
where $\pi_{1}$ and $\pi_{2}$ run over all unitary representations of $A_{1}$ and $A_{2}$, respectively.

Note that we always have $||\cdot||_{\lambda} \le ||\cdot||_{\min}$ (cf.\ \cite[IV.4(12)]{MR1873025}).
	
\begin{thm}[{Takesaki \cite[Theorem IV.4.14]{MR1873025}}]\label{thm takesaki}
Let $A_{1}$ and $A_{2}$ be \cast-algebras. Then the norms $|| \cdot ||_{\min}$ and $|| \cdot ||_{\lambda}$ on $A_{1} \otimes A_{2}$ are equal if and only if $A_{1}$ or $A_{2}$ is abelian.
\qed
\end{thm}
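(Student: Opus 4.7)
The plan is to apply Takesaki's theorem (Theorem~\ref{thm takesaki}) to the tensor product $A \otimes \M_{2}$: since $\M_{2}$ is non-abelian, showing that $||\cdot||_{\lambda}$ and $||\cdot||_{\min}$ coincide on $A \otimes \M_{2}$ will force $A$ to be abelian.

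Fix $z \in A \otimes \M_{2} = \M_{2}(A)$. Since $\theta$ is an injective $*$-homomorphism, the amplification $\theta \otimes \id_{\M_{2}}\colon \M_{2}(A) \to \M_{2}(B)$ is isometric. Applying the approximation hypothesis (\ref{2-limit}) entrywise and using continuity of the unique \cast-norm on $\M_{2}(B)$,
\begin{equation*}
||z||_{\min} = ||(\theta \otimes \id_{\M_{2}})(z)||_{\M_{2}(B)} = \lim_{\alpha}||(\psi_{\alpha} \otimes \id_{\M_{2}})(\phi_{\alpha} \otimes \id_{\M_{2}})(z)||_{\M_{2}(B)}.
\end{equation*}

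The next step is to bound each approximant by $||z||_{\lambda}$, using the abelianness of $F_{\alpha}$ and the $2$-contractivity of $\psi_{\alpha}$. Because $\phi_{\alpha}$ is contractive, so is $\phi_{\alpha} \otimes \id_{\M_{2}}\colon A \otimes_{\lambda} \M_{2} \to F_{\alpha} \otimes_{\lambda} \M_{2}$; this is immediate from the definition of $||\cdot||_{\lambda}$ via contractive functionals, since pulling such functionals back through a contraction produces contractive functionals. Since $F_{\alpha}$ is abelian, $||\cdot||_{\lambda}$ and $||\cdot||_{\min}$ agree on $F_{\alpha} \otimes \M_{2} = \M_{2}(F_{\alpha})$ (the easy direction of Takesaki, or a direct computation: evaluation functionals on $F_{\alpha}$ already achieve the $\min$-norm). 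Therefore $||(\phi_{\alpha} \otimes \id_{\M_{2}})(z)||_{\M_{2}(F_{\alpha})} \le ||z||_{\lambda}$. The $2$-contractivity of $\psi_{\alpha}$ says exactly that $\psi_{\alpha} \otimes \id_{\M_{2}}\colon \M_{2}(F_{\alpha}) \to \M_{2}(B)$ is a norm contraction, so composing,
\begin{equation*}
||(\psi_{\alpha} \otimes \id_{\M_{2}})(\phi_{\alpha} \otimes \id_{\M_{2}})(z)||_{\M_{2}(B)} \le ||z||_{\lambda}.
\end{equation*}
Passing to the limit yields $||z||_{\min} \le ||z||_{\lambda}$. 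The reverse inequality is automatic, so the two norms coincide on $A \otimes \M_{2}$, and Takesaki's theorem forces $A$ to be abelian.

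The calibration is the crux: $\M_{2}$ must be non-abelian (to extract information from Takesaki's theorem) but small enough that the $2$-contractivity of $\psi_{\alpha}$ already gives a contraction at the matrix level. The only mildly delicate point is verifying that mere contractivity of $\phi_{\alpha}$ transfers to contractivity of $\phi_{\alpha}\otimes\id_{\M_{2}}$ on the $\lambda$-norm (this would fail for the $\min$-norm), but this is built into $||\cdot||_{\lambda}$ being the injective Banach space cross-norm. Everything else is a clean combination of approximation and the two sides of Takesaki's theorem.
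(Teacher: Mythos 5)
What you have written is not a proof of the statement in question. The statement is Takesaki's theorem itself --- the assertion that $||\cdot||_{\lambda}$ and $||\cdot||_{\min}$ coincide on $A_{1} \otimes A_{2}$ if and only if $A_{1}$ or $A_{2}$ is abelian --- which the paper does not prove but imports from \cite[Theorem IV.4.14]{MR1873025} (hence the bare \verb|\qed|). Your argument instead establishes Theorem~\ref{thm takesaki proof} (that a \cast-algebra admitting the stated approximate factorisation through abelian algebras, with contractive $\phi_{\alpha}$ and $2$-contractive $\psi_{\alpha}$, is abelian), and it does so by invoking Takesaki's theorem as a black box in both directions: the ``only if'' direction to deduce abelianness of $A$ from the equality of the two norms on $A \otimes \M_{2}$, and the ``if'' direction to obtain the isometry $F_{\alpha} \otimes_{\min} \M_{2} \to F_{\alpha} \otimes_{\lambda} \M_{2}$. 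As a proof of the statement you were asked to prove, the argument is therefore circular: it contains no argument for either implication --- no proof that abelianness of one factor forces $||\cdot||_{\lambda} = ||\cdot||_{\min}$, and no construction of an element (or of functionals versus representations) witnessing $||\cdot||_{\lambda} < ||\cdot||_{\min}$ when both factors are non-abelian.

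That said, if your intended target had been Theorem~\ref{thm takesaki proof}, your argument is correct and coincides essentially line for line with the paper's own proof of that result: the isometry of $\theta \otimes \id_{\M_{2}}$, the passage to the limit on elementary tensors, the contractivity of $\phi_{\alpha} \otimes \id_{\M_{2}}$ for the injective Banach-space cross-norm, the identification of the two norms on $F_{\alpha} \otimes \M_{2}$ via the easy direction of Takesaki, and the use of $2$-contractivity of $\psi_{\alpha}$ at the final step. The mathematics you wrote is sound; it is simply aimed at the wrong statement.
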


Equipped with Takesaki's theorem, we can now mimic the proof that nuclear \cast-algebras are tensor-nuclear\footnote{It is actually closer to the proof of the fact that exact \cast-algebras with Lance's weak expectation property are tensor-nuclear.} (cf.\ \cite[Proposition 3.6.12]{MR2391387}).
\begin{proof}[Proof of Theorem~\ref{thm takesaki proof}]
We show that   for any $x \in A \otimes \M_{2}$, we have $||x||_{\min} \le ||x||_{\lambda}$. Then Theorem~\ref{thm takesaki} completes the proof. 

%the natural map $A \otimes_{\min} \M_{2} \to A \otimes_{\lambda} \M_{2}$ 

Let $x \in A \otimes \M_{2}$. The map 
	\begin{equation}
	\theta \otimes_{\min} \id_{\M_{2}} \colon A \otimes_{\min} \M_{2} \to B \otimes_{\min} \M_{2}
	\end{equation} 
is an injective $*$-homomorphism, hence an isometry. Thus
	\begin{equation}
	||x||_{A \otimes_{\min} \M_{2}} =||\theta \otimes \id_{\M_{2}} (x)||_{B \otimes_{\min} \M_{2}}.
	\end{equation}
Writing $x$ as the sum of elementary tensors, we see that
	\begin{equation}
	||(\theta - \psi_{\alpha} \circ \phi_{\alpha}) \otimes \id_{\M_{2}} (x)||_{B \otimes_{\min} \M_{2}} \to 0 \quad\text{as $\alpha \to \infty$}.
	\end{equation}
Hence 
	\begin{equation}
	||x||_{A \otimes_{\min} \M_{2}} = \lim_{n\to \infty}||(\psi_{\alpha} \circ \phi_{\alpha}) \otimes \id_{\M_{2}} (x)||_{B \otimes_{\min} \M_{2}}. 
	\end{equation}	
On the other hand, it follows from the assumptions that the maps
	\begin{align}
	\phi_{\alpha} \otimes_{\lambda} \id_{\M_{2}}&\colon A \otimes_{\lambda} \M_{2} \to F_{\alpha} \otimes_{\lambda} \M_{2}\quad\text{and}\\
	\psi_{\alpha} \otimes_{\min} \id_{\M_{2}}&\colon F_{\alpha} \otimes_{\min} \M_{2} \to B \otimes_{\min} \M_{2}
	\end{align} 
are contractions and since $F_{\alpha}$ is abelian, the canonical map 
	\begin{equation}
	F_{\alpha} \otimes_{\min} \M_{2} \to F_{\alpha} \otimes_{\lambda} \M_{2}
	\end{equation}
is an isometry by Theorem~\ref{thm takesaki}. Hence, we have
	\begin{align}
	||(\psi_{\alpha} \circ\phi_{\alpha}) \otimes \id_{\M_{2}}(x)||_{B \otimes_{\min} \M_{2}} &\le ||\phi_{\alpha} \otimes \id_{\M_{2}}(x)||_{F_{\alpha} \otimes_{\min} \M_{2}}\\
	 &= ||\phi_{\alpha} \otimes \id_{\M_{2}}(x)||_{F_{\alpha} \otimes_{\lambda} \M_{2}}\\
	&\le ||x||_{A \otimes_{\lambda} \M_{2}}.
	\end{align}
It follows that $||x||_{\min} \le ||x||_{\lambda}$. 
\end{proof}

%\section{The algebra $\M_{2}$ of $2 \times 2$-matrices}
%
%Now we give a direct proof of the following simple fact.
%\begin{prop} The algebra $\M_{2}$ of $2 \times 2$-matrices does not have the ACPAP. 
%\end{prop}
%
%
%\begin{proof} 
%Let $e^{st}$, $s$, $t \in \{1, 2\}$ denote the matrix units in $\M_{2}$. Let $\F \coloneqq \{e^{st}\}$ and let $\varepsilon > 0$. Suppose that $\M_{2}$ has the ACPAP. Then there exists $n \ge 1$ and positive matrices $b^{1}, \dots, b^{n} \ge 0$ and $h^{1}, \dots, h^{n} \ge 0$ such that
%	\begin{equation}\label{eq appr}
%	||a - \sum_{k=1}^{n} \Tr(a\cdot b^{k})h^{k}|| \le \varepsilon, \quad a \in \F.
%	\end{equation}
%Since all norms on finite-dimensional spaces are equivalent, we may use the norm 
%	\begin{equation}
%	||a|| \coloneqq \sum_{ij} |a_{ij}|, \quad a \in \M_{2}
%	\end{equation}
%instead of the operator norm. Then (\ref{eq appr}) can be rewritten as 
%	\begin{align}
%	\sum_{ij}|e^{st}_{ij} - \sum_{kpq}e^{st}_{pq}b^{k}_{qp}h^{k}_{ij}| = \sum_{ij}|e^{st}_{ij} - \sum_{k}b^{k}_{ts}h^{k}_{ij}| \le \varepsilon,
%	\end{align}
%or equivalently, 
%	\begin{equation}
%	||\sum_{st} e^{st} \otimes e^{ts} - \sum_k b^{k} \otimes h^{k}|| \le \varepsilon
%	\end{equation}  
%in $\M_{2} \otimes \M_{2}$, again using a similar norm. But this is not possible for small $\varepsilon$, since $\sum_k b^{k} \otimes h^{k}$ is positive, while $-1$ is an eigenvalue of $\sum_{st} e^{st} \otimes e^{ts}$.
%	\begin{equation}
%	E \coloneqq 
%	\begin{pmatrix}
%	1 & 0 & 0 & 0\\
%	0 & 0 & 1 & 0\\
%	0 & 1 & 0 & 0\\
%	0 & 0 & 0 & 1
%	\end{pmatrix}
%	\end{equation}
 
%\end{proof}

\bibliographystyle{amsalpha}
\bibliography{../BibTeX/biblio}

\end{document}